\numberwithin{equation}{section}
\theoremstyle{plain}
\newtheorem{thm}{Theorem}
\theoremstyle{plain}
\newtheorem{lem}{Lemma}[section]
\theoremstyle{remark}
\newtheorem*{remark}{Remark}
\theoremstyle{definition}
\newtheorem*{claim1}{Claim 1}
\theoremstyle{definition}
\newtheorem*{claim2}{Claim 2}
\theoremstyle{definition}
\newtheorem*{claim3}{Claim 3}
\theoremstyle{definition}
\newtheorem*{claim4}{Claim 4}
\def\Qq{\mathbb{Q}}
\def\Zz{\mathbb{Z}}
\def\LLL{\mathcal{L}}
\def\PPP{\mathcal{P}}
\DeclareMathOperator{\mg}{MaxGap}
\renewcommand{\le}{\leqslant}
\renewcommand{\ge}{\geqslant}
\title[Time-quantitative density of non-integrable systems]
{Time-quantitative density\\
of non-integrable systems}
\author[Beck]{J. Beck}
\address{Department of Mathematics, Hill Center for the Mathematical Sciences,
Rutgers University, Piscataway NJ 08854, USA}
\email{jbeck@math.rutgers.edu}
\author[Chen]{W.W.L. Chen}
\address{School of Mathematical and Physical Sciences, Faculty of Science and Engineering,
Macquarie University, Sydney NSW 2109, Australia}
\email{william.chen@mq.edu.au}
\keywords{geodesics, billiards, density}
\subjclass[2010]{11K38, 37E35}
\begin{document}

\begin{abstract}
We introduce a new method to establish time-quantitative density in flat dynamical systems.
First we give a shorter and different proof of our earlier result in~\cite{BC}
that a half-infinite geodesic on an arbitrary finite polysquare surface $\PPP$ is superdense on $\PPP$
if the slope of the geodesic is a badly approximable number.
We then adapt our method to study time-quantitative density of half-infinite geodesics on algebraic polyrectangle surfaces.
\end{abstract}

\maketitle

\thispagestyle{empty}

%
%

\section{Introduction}\label{sec1}

A finite polysquare region $P$ is an arbitrary connected, but not necessarily simply-connected, polygon on the plane
which is tiled with closed unit squares, called the \textit{atomic squares} or \textit{square faces} of~$P$,
and which satisfies the following conditions:

(i) Any two atomic squares in $P$ either are disjoint, or intersect at a single point, or have a common edge.

(ii) Any two atomic squares in $P$ are joined by a chain of atomic squares where any two neighbors in the chain have a common edge.

Note that $P$ may have \textit{holes}, and we also allow \textit{whole barriers} which are horizontal or vertical \textit{walls}
that consist of one or more boundary edges of atomic squares.

Given such a finite polysquare region~$P$, we can convert it into a finite polysquare surface $\PPP$
by identification in pairs of the horizontal edges and identification in pairs of the vertical edges, as illustrated in Figure~1.1.
We can then consider $1$-direction geodesic flow on such a polysquare surface.

\begin{displaymath}
\begin{array}{c}
\includegraphics{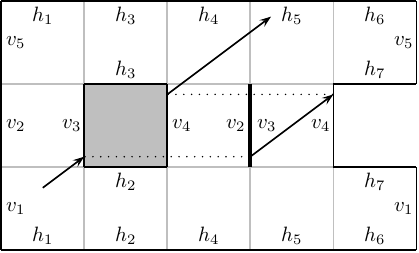}
\\
\mbox{Figure 1.1: a finite polysquare surface and part of a geodesic}
\end{array}
\end{displaymath}

A half-infinite $1$-direction geodesic $\LLL(t)$, $t\ge0$, on a given finite polysquare surface $\PPP$
and equipped with arc-length parametrization is superdense in $\PPP$ if there exists an absolute constant $C_1=C_1(\PPP;\LLL)>0$
such that, for every integer $n\ge1$, the initial segment $\LLL(t)$, $0\le t\le C_1n$, of the geodesic gets $1/n$-close to every point of ~$\PPP$.
This concept of \textit{superdensity}, which we first studied in~\cite{BCY1}, is a best possible form of time-quantitative density,
in the sense that the linear length $C_1n$ cannot be replaced by any sublinear length $o(n)$ as $n\to\infty$.
For a proof of this; see \cite[Section~6.1]{BCY1}.

In an earlier paper, we can establish the following result; see \cite[Theorem~1]{BC}.

\begin{thm}\label{thm1}
Let $\PPP$ be an arbitrary finite polysquare surface.
A half-infinite geodesic is superdense on $\PPP$ if and only if the slope of the geodesic is a badly approximable number.
\end{thm}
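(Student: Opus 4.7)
The plan is to prove the equivalence by establishing each implication separately, with the bulk of the effort going into the sufficiency direction.

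For the necessity direction (superdense $\Rightarrow$ badly approximable), I would argue by contrapositive. Suppose the slope $\alpha$ admits very good rational approximations $p/q$, meaning $|\alpha-p/q|$ is much smaller than $c/q^2$ for any fixed $c>0$. A geodesic of rational slope $p/q$ on a single atomic square is closed of period $\sqrt{p^2+q^2}$, and its image is a finite union of line segments that leaves uncovered a region of area bounded below by a constant times $1/\max(p,q)$. The geodesic of slope $\alpha$ shadows this closed orbit for a long time window proportional to $1/|\alpha-p/q|^{1/2}$ (or better), during which it fails to be $1/n$-close to every point of $\PPP$ for an appropriate $n \asymp q$. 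Choosing a sequence of $q \to \infty$ coming from the convergents of an $\alpha$ with unbounded partial quotients then forces the density constant $C_1$ to blow up, contradicting superdensity.

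The sufficiency direction is the main content. Assume $\alpha$ is badly approximable with partial quotients uniformly bounded by $M$. The plan is to reduce to a one-dimensional statement via a cross-section to the flow. Specifically, pick a union $\Sigma$ of horizontal edges in $\PPP$ that meets every atomic square, and study the first-return map $T\colon\Sigma\to\Sigma$ induced by the geodesic flow; this is an interval exchange transformation, and consecutive returns correspond to geodesic segments of length $O(1)$. The orbit of the starting point under $T$ gives a set of $N$ points on $\Sigma$, and if this set has maximum gap $O(1/N)$, then after arc-length $O(N)$ the geodesic comes within $1/N$ of every point on $\Sigma$, which in turn (since the angle is fixed) gives $1/N$-density on all of $\PPP$.

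The heart of the argument, and the main obstacle, is establishing the max-gap bound $O(1/N)$ on $\Sigma$ uniformly using only bounded partial quotients of $\alpha$. For a single atomic square this is the classical three-distance theorem of Steinhaus, where the bounded partial quotient hypothesis directly translates into a gap bound $O(1/N)$ via the continued fraction expansion. For an arbitrary polysquare surface $\PPP$ the return map is no longer a rotation: it is an IET driven by the combinatorics of how the geodesic moves between atomic squares. The approach is to lift the flow to an unfolded covering where the geodesic becomes a straight line of slope $\alpha$, so that return-time gaps on $\Sigma$ come from projecting long orbits of the rotation by $\alpha$ through a piecewise-translation structure determined by the tiling of $\PPP$ (including holes and whole barriers). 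A three-distance-type analysis adapted to this combinatorial overlay, together with the $|\alpha-p/q|\ge c/q^2$ bound, then gives the required uniform gap control. The subtle point is that holes and whole barriers can create long excursions through many squares before a return; one must show that these excursions themselves are controlled by the bounded partial quotient property and do not conspire to enlarge gaps beyond $O(1/N)$.
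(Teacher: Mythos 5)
Your high-level architecture is the same as the paper's: reduce to a one-dimensional statement on a cross-section (the paper uses a vertical edge $w$, you suggest horizontal edges, but this is cosmetic), control the maximum gap of the return points, and iterate the gap-halving to get the linear time bound that defines superdensity. The necessity direction via good rational approximations and closed-orbit shadowing is also in the right spirit; the paper routes this through Khinchin's theorem after reducing modulo one to a torus line flow, which is a cleaner packaging of the same idea.

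However, there is a genuine gap at the core of the sufficiency direction, and it is exactly where the paper does its real work. You write that ``a three-distance-type analysis adapted to this combinatorial overlay, together with the $|\alpha-p/q|\ge c/q^2$ bound, then gives the required uniform gap control,'' but this is precisely what needs to be proved, not a step that can be outsourced. The three-distance theorem applies to a single circle rotation; the first-return map on $\Sigma$ is an interval exchange whose combinatorics depend on the polysquare, and it is not a rotation. You correctly flag the danger that holes and barriers create long excursions, but you do not supply a mechanism that tames them. The paper's Lemma~\ref{lem22} is that mechanism: one tracks a small vertical interval $QR$ under forward shifts of the $\alpha$-flow; either it never splits (pigeonhole on the total length of shifted copies plus the bound $\Vert n\alpha\Vert>1/((A+2)n)$ then forces a revisit of $QR$ in time $O(1/x)$), or it splits and one retains the top piece; if the top piece stays a definite fraction of the previous length, edge repetition among the finitely many vertical edges again forces a revisit; and if the top piece shrinks too sharply, one reverses the flow from the split point and repeats, using the diophantine bound again to show the new top piece cannot be small. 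Each of these branches is quantitatively controlled, and together they give the $O(1/x)$ revisit time without any appeal to IET theory or a three-distance theorem. Without an argument at this level of concreteness, the assertion of uniform gap control on the IET is unjustified, and the proof does not close.

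A second, smaller gap: once you have a revisit-time bound, you still need a guaranteed gap \emph{decrement}, not merely a revisit. The paper's Lemma~\ref{lem23} supplies this, showing the revisit point stays a definite fraction of the gap away from both endpoints, and Lemma~\ref{lem24}--\ref{lem25} iterate this into a geometric halving of the max-gap in time $O(1/x)$, which sums to the linear bound. Your sketch jumps directly from ``$N$ return points with max-gap $O(1/N)$'' to density, without the iterative bootstrap that actually produces that max-gap bound in the first place.
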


Theorem~\ref{thm1} is an \textit{if and only if} type result, where one of the two implications is a straightforward corollary of Khinchin's theorem.
Indeed, a $1$-direction geodesic flow on a finite polysquare surface \textit{modulo one} becomes a torus line flow on $[0,1)^2$,
and Khinchin's theorem then implies that a superdense geodesic must have a badly approximable slope.
The much harder task is to prove the converse, that every badly approximable slope leads to superdensity.

In Section~\ref{sec2}, we give a shorter and different proof of this result.
Whereas our earlier technique in~\cite{BC} works for finite polysquare surfaces,
it does not seem possible to extend it to study $1$-direction geodesics on more general surfaces.
Our new method here, on the other hand, is conducive to generalization,
and we shall discuss its adaptation to algebraic polyrectangle surfaces in Sections \ref{sec3} and~\ref{sec4}.

We remark also that a consequence of Theorem~\ref{thm1} is the corresponding result that a billiard orbit in a finite polysquare region
is superdense in the region if and only if the initial slope of the orbit is a badly approximable number.
This follows from a technique called unfolding.
For more details, see our earlier paper~\cite{BC}.

%
%

\section{Illustration of the method in the simplest case}\label{sec2}

We shall make use of an important property of badly approximable numbers.

\begin{lem}\label{lem21}
Suppose that $\alpha\in(0,1)$ is badly approximable, with continued fraction
\begin{displaymath}
\alpha=[a_1,a_2,a_3,\ldots]=\frac{1}{a_1+\frac{1}{a_2+\frac{1}{a_3+\cdots}}}.
\end{displaymath}
Suppose further that $A$ is a positive number such that the continued fraction digits $a_i\le A$, $i=1,2,3,\ldots.$
Then for every integer $n\ge1$, we have
\begin{displaymath}
\Vert n\alpha\Vert>\frac{1}{(A+2)n},
\end{displaymath}
where $\Vert\beta\Vert$ denotes the distance of the real number $\beta$ from the nearest integer.
\end{lem}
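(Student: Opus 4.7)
The plan is to appeal to the classical theory of continued fraction convergents and their best approximation property. Denote by $p_k/q_k$ the convergents of $\alpha$, determined by the recurrences $p_k=a_kp_{k-1}+p_{k-2}$ and $q_k=a_kq_{k-1}+q_{k-2}$ with initial data $p_{-1}=1$, $p_0=0$, $q_{-1}=0$, $q_0=1$. Given any $n\ge 1$, I would choose $k\ge 0$ maximal with $q_k\le n$, so that $q_k\le n<q_{k+1}$. The well-known best approximation property of convergents then yields $\Vert n\alpha\Vert\ge\Vert q_k\alpha\Vert$, so it suffices to establish a sufficiently sharp lower bound on $\Vert q_k\alpha\Vert$ expressed in terms of $q_k$.

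For that lower bound I would invoke the classical identity
\begin{displaymath}
\Vert q_k\alpha\Vert=\frac{1}{\alpha_{k+1}q_k+q_{k-1}},
\end{displaymath}
where $\alpha_{k+1}=[a_{k+1};a_{k+2},a_{k+3},\ldots]$ is the $(k+1)$-st complete quotient. This identity follows at once from the Möbius relation $\alpha=(\alpha_{k+1}p_k+p_{k-1})/(\alpha_{k+1}q_k+q_{k-1})$ combined with $p_kq_{k-1}-p_{k-1}q_k=(-1)^{k-1}$. Using the hypothesis $a_i\le A$ together with $\alpha_{k+2}>1$, I bound $\alpha_{k+1}=a_{k+1}+1/\alpha_{k+2}<A+1$, while the recurrence gives $q_{k-1}\le q_k$, so
\begin{displaymath}
\alpha_{k+1}q_k+q_{k-1}<(A+1)q_k+q_k=(A+2)q_k\le(A+2)n,
\end{displaymath}
which yields the desired estimate $\Vert n\alpha\Vert>1/((A+2)n)$.

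The only genuinely subtle step is a small bookkeeping issue at $n=1$ in the regime $\alpha>1/2$, where the zeroth convergent $p_0/q_0=0$ is not the integer closest to $\alpha$, so the identification of $\Vert q_0\alpha\Vert$ with $|q_0\alpha-p_0|$ would fail. However the assumption $\alpha>1/2$ forces $a_1=1$ and hence $q_0=q_1=1$, so choosing $k=1$ instead (where $p_1=1$ is genuinely the nearest integer to $\alpha$) allows the argument above to proceed unchanged. In the complementary regime $\alpha<1/2$ one has $q_1=a_1\ge 2$, so $k=0$ is the correct index for $n=1$ and the computation applies directly.
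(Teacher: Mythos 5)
Your proposal is correct and follows essentially the same approach as the paper: choose the convergent index $k$ with $q_k\le n<q_{k+1}$, pass from $\Vert n\alpha\Vert$ to $\Vert q_k\alpha\Vert$ by the best approximation property, and bound the resulting denominator by $(A+2)q_k\le(A+2)n$. The only cosmetic difference is that you work with the exact identity involving the complete quotient $\alpha_{k+1}$ while the paper uses the equivalent inequality $\Vert q_i\alpha\Vert\ge 1/(q_i+q_{i+1})$; your attention to the $n=1$, $\alpha>1/2$ bookkeeping is a sensible extra precaution that the paper glosses over.
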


\begin{proof}
For every integer $n\ge1$, we can find an integer $i\ge0$ such that $q_i\le n<q_{i+1}$, where $q_i=q_i(\alpha)$
denotes the denominator of the $i$-th convergent of~$\alpha$.
Using well known diophantine approximation properties of continued fractions, we have
\begin{displaymath}
\Vert n\alpha\Vert
\ge\Vert q_i\alpha\Vert
\ge\frac{1}{q_i+q_{i+1}}
>\frac{1}{q_i+(a_{i+1}+1)q_i}
\ge\frac{1}{(A+2)q_i}
\ge\frac{1}{(A+2)n},
\end{displaymath}
as required.
\end{proof}

Suppose that an integer $i$ satisfies $1\le i\le s$, where $s$ is the number of atomic squares of the polysquare surface~$\PPP$.
We denote by $w_i$ the left vertical edge of the $i$-th atomic square of~$\PPP$, and by $w_i(0)$ and $w_i(1)$
the bottom and top endpoint of $w_i$ respectively, and in general by $w_i(q)$ the point on $w_i$ which is a distance $q$ from $w(0)$.
Furthermore, for any set $S\subseteq[0,1]$, we write
\begin{displaymath}
w_iS=\{w_i(q):q\in S\},
\end{displaymath}
so that $w_i=w_i[0,1]$.

Consider the geodesic $\LLL(t)=\LLL_\alpha(t)$ with slope $\alpha$ and starting point $\LLL(0)=R$,
where $R$ lies on the left vertical edge of the $i_0$-th atomic square of the polysquare surface~$\PPP$.
Assume that $\LLL(t)$ has arc-length parametrization, and that it does not hit a vertex of $\PPP$
over a sufficiently long neighborhood $-T\le t\le T$ of~$0$.
Then $R=w_{i_0}(y)$ for some $y$ satisfying $0<y<1$.
Let $Q=w_{i_0}(z)$ be a point where $0<z<y<1$.
We study the following question.
What can we say about the time $t$ with $\LLL(t)\in QR$ such that $\vert t\vert$ is minimum?
Here $QR$ denotes the open interval with endpoints $Q$ and~$R$.
In other words, how long does it take for the geodesic, starting at the point~$R$, to visit the open interval $QR$ of length $x=y-z$,
if the geodesic can go both forward and backward?

\begin{lem}\label{lem22}
Let $QR$ be an open vertical segment, with top endpoint $R$ and length $x>0$, on the left vertical edge of an atomic square
of the polysquare surface~$\PPP$.
Consider a geodesic $\LLL(t)$ with badly approximable slope $\alpha$ and starting point $\LLL(0)=R$.
There exists an explicit constant $c_0(A;s)$, depending at most on the parameter $A=A(\alpha)$
and the number $s$ of atomic squares of the polysquare surface~$\PPP$, such that there is a $2$-direction visiting time $t^*$ satisfying
\begin{displaymath}
0<\vert t^*\vert\le\frac{c_0(A;s)}{x}
\quad\mbox{and}\quad
\LLL(t^*)\in QR.
\end{displaymath}
\end{lem}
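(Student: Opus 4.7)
The plan is to reduce the problem to a first-return computation on the union of left edges, using Lemma~\ref{lem21} to control the vertical displacement and a pigeonhole/monodromy argument to ensure the orbit lands back on~$w_{i_0}$. I would begin by defining the first-return map $T\colon L\to L$ on $L=w_1\cup\ldots\cup w_s$ by following the forward flow. Each application of $T$ corresponds to a horizontal displacement of exactly~$1$---this remains true even when the geodesic crosses a top or bottom edge en route, since horizontal-edge identifications preserve the horizontal coordinate---so between consecutive visits to $L$ the arc length is $\sqrt{1+\alpha^2}$ and the vertical coordinate on $L$ shifts by $\alpha$ modulo~$1$. Iterating, $T^n$ has vertical shift $\{n\alpha\}$, and finding~$t^*$ reduces to finding a nonzero $n\in\Zz$ with $|n|$ small, $T^n(R)\in w_{i_0}$ (the \emph{edge condition}), and $\{y+n\alpha\}\in(z,y)$ (the \emph{vertical condition}, equivalent to $\|n\alpha\|<x$ with $n\alpha$ just below an integer for $n>0$, or just above for $n<0$); then $|t^*|=|n|\sqrt{1+\alpha^2}$.

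For the vertical condition, I use standard continued-fraction theory: taking $N_0=q_i$, the $i$-th convergent denominator of~$\alpha$ with $q_i\le s/x<q_{i+1}$, gives $\|N_0\alpha\|\le 1/q_{i+1}<x/s$ and $N_0\le s/x$. Lemma~\ref{lem21} enters through the gap bound $q_{i+1}\le(A+2)q_i$ that it implies (combined with $\|q_i\alpha\|\le 1/q_{i+1}$), which places $N_0$ within the controlled range $(s/((A+2)x),s/x]$. The triangle inequality then yields $\|mN_0\alpha\|\le m\|N_0\alpha\|<x$ for every $1\le m\le s$, so each of the $2s$ iterates $T^{\pm mN_0}(R)$ lies on $L$ at vertical coordinate within~$x$ of~$y$ (modulo~$1$); choosing the sign of~$m$ appropriately forces the vertical coordinate into $(z,y)$.

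For the edge condition, I would exploit the branched-cover structure $\PPP\to\Tt^2$ over the base torus. The finite monodromy group acts on the $s$-element fiber, and the stabilizer of sheet~$i_0$ is a subgroup of index at most~$s$; in particular, the set $J\subseteq\Zz$ of $k$ with $T^k(R)\in w_{i_0}$ contains a subgroup $d\Zz$ with $d\le s$ arising from the order of the horizontal monodromy in the finite fiber-permutation group. Among the $s$ multiples $mN_0$ for $m=1,\ldots,s$, at least one satisfies $m\equiv 0\pmod{d/\gcd(d,N_0)}$, so $mN_0\in d\Zz\subseteq J$ and $T^{mN_0}(R)\in w_{i_0}$. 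Combining, $|n|\le sN_0\le s^2/x$, whence $|t^*|\le c_0(A;s)/x$ with $c_0(A;s)$ polynomial in~$s$ and~$A+2$.

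The main obstacle is precisely the edge condition. The monodromy at the $k$-th first-return depends on both~$k$ and the vertical wrapping count $\lfloor y+k\alpha\rfloor$, the latter varying non-linearly with~$k$, so the set $J$ need not itself be a subgroup---only the horizontal monodromy component is captured by $d\Zz$. A rigorous proof either performs a direct analysis of the interval-exchange structure of $T$ restricted to each $w_i$ (using the finiteness of~$s$ to bound first-return times) or carefully combines the horizontal and vertical monodromy contributions through a counting argument along arithmetic progressions. The finite cardinality of the fiber is ultimately what yields the polynomial dependence on~$s$.
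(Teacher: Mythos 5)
You correctly reduce the problem to finding a small nonzero $n$ satisfying simultaneously a vertical condition (that $\{y+n\alpha\}$ lands in a one-sided $x$-neighbourhood of $y$) and an edge condition (that $T^n(R)\in w_{i_0}$), and your continued-fraction treatment of the vertical condition is sound. But you flag---and do not resolve---the edge condition, and that is a genuine gap, not a technicality. The first-return map $T$ is an interval exchange; which edge $T^k(R)$ lands on depends jointly on $k$ and on the vertical wrapping count $\lfloor y+k\alpha\rfloor$, which grows non-periodically in $k$. Consequently the set $J=\{k:T^k(R)\in w_{i_0}\}$ need not contain an arithmetic progression of common difference $\le s$, so the pigeonhole over $m=1,\ldots,s$ that your scheme rests on has nothing to grab. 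Worse, nothing even forces a return to the specific edge $w_{i_0}$ within $O(s)$ iterates: pigeonhole over the $s$ edges only guarantees that \emph{some} edge is revisited, not $w_{i_0}$, and the orbit of the single point $R$ under $T$ can avoid $w_{i_0}$ for a long stretch.

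The paper's proof sidesteps the edge condition entirely, and that is the idea you are missing. Rather than flowing the single point $R$ and demanding a return to $w_{i_0}$, it flows the whole interval $QR$, keeping only the \emph{top} subinterval after each split at a vertex. Each retained top interval $Q^{(i)}R^{(i)}$ has $R^{(i)}=\LLL(t_i)$ on the geodesic, and the reverse flow from time $t_i$ sends $Q^{(i)}R^{(i)}$ into a subinterval of $QR$ with top endpoint $R$. Since there are only $s$ vertical edges, within $s+1$ splits two top intervals must land on the \emph{same} edge (not necessarily $w_{i_0}$), one nested in the other, and reversing the flow from the earlier one produces $\LLL(t^*)\in QR$. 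Lemma~\ref{lem21} is then deployed not to locate a return time but to control interval lengths: if a forward split shrinks the top interval by more than the factor $c_1(A;s)$, the paper switches to the reverse $\alpha$-flow from the last good interval, and the lower bound $\Vert n\alpha\Vert>1/((A+2)n)$ forces the reverse top intervals to remain comparatively large. In short, your plan invests the diophantine input in the vertical condition and leaves the edge condition exposed; the paper invests it in length control and dissolves the edge condition by pigeonhole over all edges simultaneously.
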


\begin{proof}
Let $S\subset[0,1]$ denote an open interval on the left vertical edge of an atomic square of~$\PPP$.
The $\alpha$-flow shifts $S$ until it hits some vertical edge or edges of $\PPP$ for the first time, with image $S(\alpha)$, say.
If $S(\alpha)$ contains a vertex of~$\PPP$, as in the picture on the right in Figure~2.1, then we say that the shift of $S$ by the $\alpha$-flow splits.
If $S(\alpha)$ does not contain a vertex of~$\PPP$, as in the picture on the left in Figure~2.1,
then we say that the shift of $S$ by the $\alpha$-flow does not split.

\begin{displaymath}
\begin{array}{c}
\includegraphics{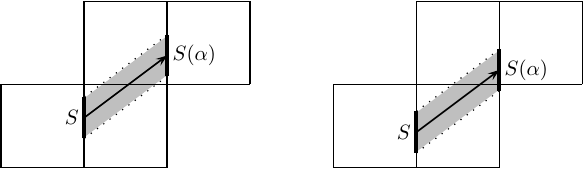}
\\
\mbox{Figure 2.1: shift of an interval by the $\alpha$-flow}
\end{array}
\end{displaymath}

Suppose now that $Q=w_{i_0}(z)$ and $R=w_{i_0}(y)$ where $0<z<y<1$.

If the shift of the open interval $QR$ under the $\alpha$-flow does not split, then there exists an integer $i_1$ satisfying $1\le i_1\le s$
such that $QR$ is shifted to an open interval $Q_1R_1$ on the vertical edge~$w_{i_1}$, and
\begin{displaymath}
Q_1=w_{i_1}(\{z+\alpha\})
\quad\mbox{and}\quad
R_1=w_{i_1}(\{y+\alpha\}),
\end{displaymath}
where $\{\beta\}$ denotes the fractional part of the real number~$\beta$.
Let us now repeat the argument with the open interval~$Q_1R_1$.
If the shift of $Q_1R_1$ under the $\alpha$-flow does not split, then there exists an integer $i_2$ satisfying $1\le i_2\le s$
such that $Q_1R_1$ is shifted to an open interval $Q_2R_2$ on the vertical edge~$w_{i_2}$, and
\begin{displaymath}
Q_2=w_{i_2}(\{z+2\alpha\})
\quad\mbox{and}\quad
R_2=w_{i_2}(\{y+2\alpha\}).
\end{displaymath}
We now repeat the argument with the open interval~$Q_2R_2$, and so on, until we get the first split.

\begin{claim1}
Suppose that there is no split among the first $[9s^2(A+2)/x]$ consecutive shifts of the open interval $QR$ under the $\alpha$-flow,
where $QR$ has length $x>0$.
Then there is a visiting time $t^*$ such that $0<t^*\le9\sqrt{2}s^2(A+2)/x$ and $\LLL(t^*)\in QR$,
so that the conclusion of Lemma~\ref{lem22} holds with a suitable constant $c_0(A;s)$.
\end{claim1}

\begin{proof}[Justification of Claim~1]
The open intervals $Q_jR_j$, where $1\le j\le9s^2(A+2)/x$, all have length~$x$.
Their total length is therefore at least $9s^2(A+2)-1$.
It follows easily from the Pigeonhole Principle that there exists a point $P$ which is covered by at least $8s(A+2)$ of these open intervals~$Q_jR_j$.
In other words, there exist integers $j_\nu$, $\nu=1,\ldots,8s(A+2)$, such that
\begin{displaymath}
1\le j_1<j_2<\ldots<j_{8s(A+2)}\le\frac{9s^2(A+2)}{x}
\end{displaymath}
such that $P$ is contained in
\begin{displaymath}
Q_{j_\nu}R_{j_\nu},
\quad
\nu=1,\ldots,8s(A+2).
\end{displaymath}
For every $\nu=1,\ldots,8s(A+2)$, the open interval $Q_{j_\nu}R_{j_\nu}$ lies on the vertical edge~$w_{i_{j_\nu}}$.
It follows that the values $i_{j_\nu}$, $\nu=1,\ldots,8s(A+2)$, are all equal to each other.
Suppose that $i^*$ is their common value.
Then for every $\nu=1,\ldots,8s(A+2)$, we can write
\textcolor{white}{xxxxxxxxxxxxxxxxxxxxxxxxxxxxxx}
\begin{displaymath}
R_{j_\nu}=w_{i^*}(u_\nu),
\end{displaymath}
where $0<u_\nu<1$, and write
\textcolor{white}{xxxxxxxxxxxxxxxxxxxxxxxxxxxxxx}
\begin{displaymath}
t_\nu=\sqrt{1+\alpha^2}j_\nu.
\end{displaymath}

Suppose first that there exist two integers $\nu'$ and $\nu''$ such that
\begin{equation}\label{eq2.1}
1\le\nu'<\nu''\le8s(A+2)
\quad\mbox{and}\quad
u_{\nu'}>u_{\nu''}.
\end{equation}
Since $Q_{j_{\nu'}}R_{j_{\nu'}}$ and $Q_{j_{\nu''}}R_{j_{\nu''}}$ intersect, we clearly have
\begin{displaymath}
R_{j_{\nu''}}=\LLL(t_{\nu''})\in Q_{j_{\nu'}}R_{j_{\nu'}}.
\end{displaymath}
Applying the reverse $\alpha$-flow for time~$t_{\nu'}$ then takes $Q_{j_{\nu'}}R_{j_{\nu'}}$ to~$QR$,
and also takes $R_{j_{\nu''}}=\LLL(t_{\nu''})$ to $\LLL(t_{\nu''}-t_{\nu'})$, so that $\LLL(t_{\nu''}-t_{\nu'})\in QR$.
Now take $t^*=t_{\nu''}-t_{\nu'}>0$.
Then
\textcolor{white}{xxxxxxxxxxxxxxxxxxxxxxxxxxxxxx}
\begin{displaymath}
0<t^*=\sqrt{1+\alpha^2}(j_{\nu''}-j_{\nu'})\le\frac{9\sqrt{2}s^2(A+2)}{x},
\end{displaymath}
justifying the claim.

Suppose next that there do not exist two integers $\nu'$ and $\nu''$ such that \eqref{eq2.1} holds.
Then we must have
\begin{displaymath}
u_1<u_2<\ldots<u_{8s(A+2)}
\quad\mbox{and}\quad
u_{8s(A+2)}-u_1\le x,
\end{displaymath}
and a routine average computation argument shows that for at least $5s(A+2)$ of the indices $\nu=1,\ldots,8s(A+2)$, we have
\begin{equation}\label{eq2.2}
u_{\nu+1}-u_\nu\le\frac{x}{3s(A+2)}.
\end{equation}
On the other hand, we also have
\begin{displaymath}
j_1<j_2<\ldots<j_{8s(A+2)}
\quad\mbox{and}\quad
j_{8s(A+2)}-j_1\le\frac{9s^2(A+2)}{x},
\end{displaymath}
and a routine average computation argument shows that for at least $5s(A+2)$ of the indices $\nu=1,\ldots,8s(A+2)$, we have
\begin{equation}\label{eq2.3}
j_{\nu+1}-j_\nu\le\frac{3s}{x}.
\end{equation}
It follows that there must exist some index $\nu=1,\ldots,8s(A+2)$ such that both \eqref{eq2.2} and \eqref{eq2.3} hold.
For this value of~$\nu$, Lemma~\ref{lem21} and \eqref{eq2.3} then lead to
\begin{equation}\label{eq2.4}
\Vert(j_{\nu+1}-j_\nu)\alpha\Vert
>\frac{1}{(A+2)(j_{\nu+1}-j_\nu)}
\ge\frac{x}{3s(A+2)}.
\end{equation}
On the other hand, in view of \eqref{eq2.2}, we have
\begin{equation}\label{eq2.5}
\Vert(j_{\nu+1}-j_\nu)\alpha\Vert=u_{\nu+1}-u_\nu\le\frac{x}{3s(A+2)}.
\end{equation}
Clearly \eqref{eq2.4} and \eqref{eq2.5} contradict each other, so this possibility cannot take place.

It follows that there exist two integers $\nu'$ and $\nu''$ such that \eqref{eq2.1} holds, and this completes our justification of Claim~1.
\end{proof}

In view of Claim~1, we may assume that there exists an integer $k$ such that
\begin{displaymath}
1\le k\le\frac{9s^2(A+2)}{x}
\end{displaymath}
and the $k$-th shift under the $\alpha$-flow of the open interval $QR$ of length $x$ splits for the first time.

Suppose that the image of the original open interval $QR$ after the first $k$ shifts under the $\alpha$-flow
now consists of a vertex of $\PPP$ and two intervals
\begin{displaymath}
w_{j_1}(0,x_1)
\quad\mbox{and}\quad
w_{\ell_1}(1-x_1^*,1),
\end{displaymath}
where $x_1+x_1^*=x$.
We call $w_{j_1}(0,x_1)$ and $w_{\ell_1}(1-x_1^*,1)$ respectively the \textit{top interval} and the \textit{bottom interval}.
Since the starting point $R$ of the geodesic is the \textit{top endpoint} of the interval $QR=w_{i_0}(z,y)$,
we shall make use of top intervals in our subsequent argument.
We distinguish two cases.
Either
\begin{equation}\label{eq2.6}
x_1\ge c_1(A;s)x
\quad\mbox{or}\quad
0<x_1<c_1(A;s)x,
\end{equation}
where the choice of the constant
\begin{equation}\label{eq2.7}
c_1(A;s)=\frac{1}{(36s^2(A+2)^2)^{s+1}}
\end{equation}
will be explained later.

Suppose that the first case in \eqref{eq2.6} holds.
Then we delete the bottom interval $w_{\ell_1}(1-x_1^*,1)$, keep the top interval $w_{j_1}(0,x_1)$ and write $Q^{(1)}R^{(1)}=w_{j_1}(0,x_1)$.
It then follows from our construction that the geodesic $\LLL(t)$, $t\ge0$, starting at the point~$R$, contains the point~$R^{(1)}$, so that
\begin{displaymath}
R^{(1)}=\LLL(t_1)
\quad\mbox{for some $t_1>0$}.
\end{displaymath}

We now repeat this argument on the open interval $Q^{(1)}R^{(1)}=w_{j_1}(0,x_1)$.

Corresponding to Claim~1, we have the following analog.
Suppose that there is no split among the first $[9s^2(A+2)/x_1]$ consecutive shifts of the open interval $Q^{(1)}R^{(1)}$
under the $\alpha$-flow, where $Q^{(1)}R^{(1)}$ has length $x_1>0$.
Let $\LLL^{(1)}(t)=\LLL(t+t_1)$ for every $t\ge0$.
Then there is a first visiting time $t^*$ such that $0<t^*\le9\sqrt{2}s^2(A+2)/x_1$ and $\LLL^{(1)}(t^*)\in Q^{(1)}R^{(1)}$,
\textit{i.e.}, $\LLL(t^*+t_1)\in Q^{(1)}R^{(1)}$.
Applying the reverse $\alpha$-flow for time $t_1$ then leads to $\LLL(t^*)\in QR$, so that the conclusion of Lemma~\ref{lem22}
holds with a suitable constant $c_0(A;s)$.
We may assume that there exists an integer $k_1$ such that
\begin{displaymath}
1\le k_1\le\frac{9s^2(A+2)}{x_1}
\end{displaymath}
and the $k_1$-th shift under the $\alpha$-flow of the open interval $Q^{(1)}R^{(1)}$ of length $x_1$ splits for the first time.

Suppose that the image of the open interval $Q^{(1)}R^{(1)}$ after the first $k_1$ shifts under the $\alpha$-flow
now consists of a vertex of $\PPP$ and two intervals
\begin{displaymath}
w_{j_2}(0,x_2)
\quad\mbox{and}\quad
w_{\ell_2}(1-x_2^*,1),
\end{displaymath}
where $x_2+x_2^*=x_1$.
We distinguish two cases.
Either
\begin{equation}\label{eq2.8}
x_2\ge c_1(A;s)x_1
\quad\mbox{or}\quad
0<x_2<c_1(A;s)x_1,
\end{equation}
where the constant $c_1(A;s)$ is defined by \eqref{eq2.7}.

Suppose that the first case in \eqref{eq2.8} holds.
Then we delete the bottom interval $w_{\ell_2}(1-x_2^*,1)$, keep the top interval $w_{j_2}(0,x_2)$ and write $Q^{(2)}R^{(2)}=w_{j_2}(0,x_2)$.
It then follows from our construction that the geodesic $\LLL(t)$, $t\ge0$, starting at the point~$R$, contains the point~$R^{(2)}$, so that
\begin{displaymath}
R^{(2)}=\LLL(t_2)
\quad\mbox{for some $t_2>t_1$}.
\end{displaymath}

We now repeat this argument on the open interval $Q^{(2)}R^{(2)}=w_{j_2}(0,x_2)$.

In view of another suitable analog of Claim~1, we may assume that there exists an integer $k_2$ such that
\textcolor{white}{xxxxxxxxxxxxxxxxxxxxxxxxxxxxxx}
\begin{displaymath}
1\le k_2\le\frac{9s^2(A+2)}{x_2}
\end{displaymath}
and the $k_2$-th shift under the $\alpha$-flow of the open interval $Q^{(2)}R^{(2)}$ of length $x_2$ splits for the first time.

Suppose that the image of the open interval $Q^{(2)}R^{(2)}$ after the first $k_2$ shifts under the $\alpha$-flow now consists of a vertex of
$\PPP$ and two intervals
\begin{displaymath}
w_{j_3}(0,x_3)
\quad\mbox{and}\quad
w_{\ell_3}(1-x_3^*,1),
\end{displaymath}
where $x_3+x_3^*=x_2$.
We distinguish two cases.
Either
\begin{equation}\label{eq2.9}
x_3\ge c_1(A;s)x_2
\quad\mbox{or}\quad
0<x_3<c_1(A;s)x_2,
\end{equation}
where the constant $c_1(A;s)$ is defined by \eqref{eq2.7}.

Suppose that the first case in \eqref{eq2.9} holds.
Then we delete the bottom interval $w_{\ell_3}(1-x_3^*,1)$, keep the top interval $w_{j_3}(0,x_3)$ and write $Q^{(3)}R^{(3)}=w_{j_3}(0,x_3)$.
It then follows from our construction that the geodesic $\LLL(t)$, $t\ge0$, starting at the point~$R$, contains the point~$R^{(3)}$, so that
\begin{displaymath}
R^{(3)}=\LLL(t_3)
\quad\mbox{for some $t_3>t_2$}.
\end{displaymath}

We now repeat this argument on the open interval $Q^{(3)}R^{(3)}=w_{j_3}(0,x_3)$.

And so on, \textit{assuming that at each step, the first case in the corresponding analog of \eqref{eq2.6}, \eqref{eq2.8} and \eqref{eq2.9} holds}.

This forward shift process under the $\alpha$-flow defines a sequence of top intervals
\begin{equation}\label{eq2.10}
Q^{(i)}R^{(i)}=w_{j_i}(0,x_i),
\quad
i\ge1,
\end{equation}
each of which arises when the $k_{i-1}$-th shift under the $\alpha$-flow of the open interval $Q^{(i-1)}R^{(i-1)}$ of length $x_{i-1}$
splits for the first time, and the integer $k_{i-1}$ satisfies
\begin{equation}\label{eq2.11}
1\le k_{i-1}\le\frac{9s^2(A+2)}{x_{i-1}}.
\end{equation}
The lengths $x_i$ of these intervals \eqref{eq2.10} satisfy
\begin{equation}\label{eq2.12}
x_i\ge c_1(A;s)x_{i-1},
\end{equation}
with the convention that $x_0=x$.
Furthermore, the geodesic $\LLL(t)$, $t\ge0$, starting at the point $R$ contains the point~$R^{(i)}$, so that
\begin{displaymath}
R^{(i)}=\LLL(t_i)
\quad\mbox{for some $t_i>t_{i-1}$},
\end{displaymath}
where $t_0=0$.

As there are only finitely many vertical edges in the polysquare surface~$\PPP$, there will at some point be \textit{edge repetition},
when there exist two integers $i_1$ and $i_2$ satisfying $1\le i_1<i_2$ such that the corresponding top intervals
\begin{displaymath}
Q^{(i_1)}R^{(i_1)}=w_{j_{i_1}}(0,x_{i_1})
\quad\mbox{and}\quad
Q^{(i_2)}R^{(i_2)}=w_{j_{i_2}}(0,x_{i_2})
\end{displaymath}
lying respectively on the vertical edges $w_{j_{i_1}}$ and~$w_{j_{i_2}}$, overlap.
Thus $j_{i_1}=j_{i_2}$.
Now suppose that $j^*$ is their common value.
Then
\begin{equation}\label{eq2.13}
Q^{(i_1)}R^{(i_1)}=w_{j^*}(0,x_{i_1})
\quad\mbox{and}\quad
Q^{(i_2)}R^{(i_2)}=w_{j^*}(0,x_{i_2}).
\end{equation}
Furthermore, since there are precisely $s$ vertical edges on~$\PPP$, it follows that
\begin{equation}\label{eq2.14}
1\le i_1<i_2\le s+1.
\end{equation}

\begin{claim2}
Suppose that there exist integers $i_1$ and $i_2$ satisfying \eqref{eq2.14} such that the following conditions hold:

(1)
For every integer $i$ satisfying $1\le i\le i_2$, there exists an integer $k_{i-1}$ satisfying \eqref{eq2.11}
such that the top interval $Q^{(i)}R^{(i)}$ given by \eqref{eq2.10} arises when the $k_{i-1}$-th shift
under the $\alpha$-flow of the open interval $Q^{(i-1)}R^{(i-1)}$ splits for the first time, where $Q^{(0)}R^{(0)}=QR$.

(2)
For every integer $i$ satisfying $1\le i\le i_2$, the condition \eqref{eq2.12} holds, where $x_0=x$.

(3)
There exists an integer $j^*$ such that the condition \eqref{eq2.13} holds.

Then there is a visiting time $t^*$ such that $0<t^*\le t_{i_2}$ and $\LLL(t^*)\in QR$, where $R^{(i_2)}=\LLL(t_{i_2})$,
and the conclusion of Lemma~\ref{lem22} holds with a suitable constant $c_0(A;s)$.
\end{claim2}

\begin{proof}[Justification of Claim~2]
Since $i_1<i_2$, we have $x_{i_1}>x_{i_2}$.
It follows from \eqref{eq2.13} that
\begin{displaymath}
R^{(i_2)}=\LLL(t_{i_2})\in Q^{(i_1)}R^{(i_1)}.
\end{displaymath}
Applying the reverse $\alpha$-flow for time $t_{i_1}$ then takes $Q^{(i_1)}R^{(i_1)}$ to~$QR$,
and also takes $R^{(i_2)}=\LLL(t_{i_2})$ to $\LLL(t_{i_2}-t_{i_1})$, so that $\LLL(t_{i_2}-t_{i_1})\in QR$.
This justifies the first assertion in Claim~2.
Next, note that the open interval $Q^{(i_2)}R^{(i_2)}$ arises as a consequence of
\begin{displaymath}
k+k_1+\ldots+k_{i_2-1}
\le\sum_{i=0}^s\frac{9s^2(A+2)}{x_i}
\le\frac{1}{x}\sum_{i=0}^s\frac{9s^2(A+2)}{(c_1(A;s))^i}
\end{displaymath}
consecutive shifts under the $\alpha$-flow of the open interval~$QR$, using \eqref{eq2.11} and \eqref{eq2.12}.
Since each shift under the $\alpha$-flow corresponds to a geodesic segment of length $\sqrt{1+\alpha^2}\le\sqrt{2}$, it follows that
\begin{displaymath}
t_{i_2}\le\frac{1}{x}\sum_{i=0}^s\frac{9\sqrt{2}s^2(A+2)}{(c_1(A;s))^i}.
\end{displaymath}
If the constant $c_0(A;s)$ in Lemma~\ref{lem22} is chosen to satisfy
\begin{equation}\label{eq2.15}
c_0(A;s)\ge\sum_{i=0}^s\frac{9\sqrt{2}s^2(A+2)}{(c_1(A;s))^i},
\end{equation}
then the conclusion of Lemma~\ref{lem22} holds.
\end{proof}

Suppose next that before edge repetition takes place, the condition \eqref{eq2.12} fails.
More precisely, suppose that $r$ satisfying $0\le r\le s$ is the smallest integer $i$ such that $x_{i+1}<c_1(A;s)x_i$.
Then
\textcolor{white}{xxxxxxxxxxxxxxxxxxxxxxxxxxxxxx}
\begin{equation}\label{eq2.16}
x_{r+1}<c_1(A;s)x_r,
\end{equation}
and
\textcolor{white}{xxxxxxxxxxxxxxxxxxxxxxxxxxxxxx}
\begin{equation}\label{eq2.17}
x_i\ge c_1(A;s)x_{i-1},
\quad
i=1,\ldots,r,
\end{equation}
with $x_0=x$.
Furthermore, there exists an integer $k_r$ such that
\begin{equation}\label{eq2.18}
1\le k_r\le\frac{9s^2(A+2)}{x_r}
\end{equation}
and the $k_r$-th shift under the $\alpha$-flow of the open interval $Q^{(r)}R^{(r)}$ of length $x_r$ splits for the first time,
with the image consisting of a vertex of $\PPP$ and two intervals
\begin{displaymath}
w_{j_{r+1}}(0,x_{r+1})
\quad\mbox{and}\quad
w_{\ell_{r+1}}(1-x_{r+1}^*,1),
\end{displaymath}
where $x_{r+1}+x_{r+1}^*=x_r$ and \eqref{eq2.16} holds.

We now start with the interval $Q^{(r)}R^{(r)}=w_{j_r}(0,x_r)$ and apply the reverse $\alpha$-flow until it splits for the first time on a vertical edge.
We have the following analog of Claim~1.

\begin{claim3}
Suppose that there is no split among the first $[9s^2(A+2)/x_r]$ consecutive shifts of the open interval $Q^{(r)}R^{(r)}$
under the reverse $\alpha$-flow, where $Q^{(r)}R^{(r)}$ has length $x_r>0$.
Let $\LLL_r(t)=\LLL(t+t_r)$ for every $t\le0$.
Then there is a visiting time $t^*$ such that $0>t^*\ge-9\sqrt{2}s^2(A+2)/x_r$ and $\LLL_r(t^*)\in Q^{(r)}R^{(r)}$, \textit{i.e.}
$\LLL(t^*+t_r)\in Q^{(r)}R^{(r)}$.
Applying the reverse $\alpha$-flow for time $t_r$ then leads to $\LLL(t^*)\in QR$, so that the conclusion of Lemma~\ref{lem22}
holds with a suitable constant $c_0(A;s)$.
\end{claim3}

\begin{proof}[Justification of Claim~3]
Let
\begin{displaymath}
Q^{(r)}_jR^{(r)}_j,
\quad
j=1,\ldots,[9s^2(A+2)/x_r],
\end{displaymath}
be successive open intervals under the reverse $\alpha$-flow, starting at $Q^{(r)}R^{(r)}$.
These intervals all have length~$x_r$.
Their total length is therefore at least $9s^2(A+2)-1$.
It follows easily from the Pigeonhole Principle that there exists a point $P$ which is covered by at least $8s(A+2)$ of these open
intervals~$Q^{(r)}_jR^{(r)}_j$.
In other words, there exist integers $j_\nu$, $\nu=1,\ldots,8s(A+2)$, such that
\begin{displaymath}
1\le j_1<j_2<\ldots<j_{8s(A+2)}\le\frac{9s^2(A+2)}{x_r}
\end{displaymath}
such that $P$ is contained in
\begin{displaymath}
Q^{(r)}_{j_\nu}R^{(r)}_{j_\nu},
\quad
\nu=1,\ldots,8s(A+2),
\end{displaymath}
and there exists $i^*$ such that all these intervals lie on the same vertical edge~$w_{i^*}$.
Then for every $\nu=1,\ldots,8s(A+2)$, we can write
\begin{displaymath}
R^{(r)}_{j_\nu}=w_{i^*}(u_\nu),
\end{displaymath}
where $0<u_\nu<1$, and write
\textcolor{white}{xxxxxxxxxxxxxxxxxxxxxxxxxxxxxx}
\begin{displaymath}
t_\nu=-\sqrt{1+\alpha^2}j_\nu.
\end{displaymath}

Suppose first that there exist two integers $\nu'$ and $\nu''$ such that
\begin{equation}\label{eq2.19}
1\le\nu'<\nu''\le8s(A+2)
\quad\mbox{and}\quad
u_{\nu'}>u_{\nu''}.
\end{equation}
Since $Q_{j_{\nu'}}R_{j_{\nu'}}$ and $Q_{j_{\nu''}}R_{j_{\nu''}}$ intersect, we clearly have
\begin{displaymath}
R^{(r)}_{j_{\nu''}}=\LLL_r(t_{\nu''})\in Q^{(r)}_{j_{\nu'}}R^{(r)}_{j_{\nu'}}.
\end{displaymath}
Applying the forward $\alpha$-flow for time~$-t_{\nu'}$ then takes $Q^{(r)}_{j_{\nu'}}R^{(r)}_{j_{\nu'}}$ to~$Q^{(r)}R^{(r)}$, and also takes
$R^{(r)}_{j_{\nu''}}=\LLL_r(t_{\nu''})$ to $\LLL_r(t_{\nu''}-t_{\nu'})$, so that $\LLL_r(t_{\nu''}-t_{\nu'})\in Q^{(r)}R^{(r)}$.
Now take $t^*=t_{\nu''}-t_{\nu'}<0$.
Then
\begin{displaymath}
0>t^*=-\sqrt{1+\alpha^2}(j_{\nu''}-j_{\nu'})\ge-\frac{9\sqrt{2}s^2(A+2)}{x_r}.
\end{displaymath}
Thus $\LLL(t^*+t_r)=\LLL_r(t^*)\in Q^{(r)}R^{(r)}$, justifying the claim.

Suppose next that there do not exist two integers $\nu'$ and $\nu''$ such that \eqref{eq2.19} holds.
Then we can show as in the justification of Claim~1 that this possibility cannot take place.
It follows that there exist two integers $\nu'$ and $\nu''$ such that \eqref{eq2.19} holds, and this completes our justification of Claim~3.
\end{proof}

\begin{remark}
Note that there is no split among the first $[9s^2(A+2)/x_r]$ consecutive shifts of the open interval $Q^{(r)}R^{(r)}$
under the reverse $\alpha$-flow in Claim~3, even if this takes us back to the original vertical interval $QR$ and beyond,
as this is our assumption.
On the other hand, the conclusion of Claim~3 that there exists some $t^*<0$ such that $\LLL(t^*)\in QR$
makes one wonder what happens to $QR$ and some of its subintervals under the influence of the reverse $\alpha$-flow.
This is, however, an unwelcome distraction.
The reality is that we have found this special $t^*$ by proper means, and the effect of this reverse $\alpha$-flow on $QR$
and some of its subintervals is totally irrelevant.
\end{remark}

In view of Claim~3, we may assume that there exists an integer $k_{r+1}$ such that
\begin{equation}\label{eq2.20}
1\le k_{r+1}\le\frac{9s^2(A+2)}{x_r}
\end{equation}
and the $k_{r+1}$-th shift under the reverse $\alpha$-flow of the open interval $Q^{(r)}R^{(r)}$ of length $x_r$ splits for the first time.

Suppose that the image of the open interval $Q^{(r)}R^{(r)}$ after the first $k_{r+1}$ shifts under the reverse $\alpha$-flow
now consists of a vertex of $\PPP$ and two intervals
\begin{displaymath}
w_{j'_{r+1}}(0,y_{r+1})
\quad\mbox{and}\quad
w_{\ell'_{r+1}}(1-y_{r+1}^*,1),
\end{displaymath}
where $y_{r+1}+y_{r+1}^*=x_r$.
Then we delete the bottom interval $w_{\ell'_{r+1}}(1-y_{r+1}^*,1)$, keep the top interval $w_{j'_{r+1}}(0,y_{r+1})$ and write
$Q_{r+1}R_{r+1}=w_{j'_{r+1}}(0,y_{r+1})$.
It then follows from our construction that the geodesic $\LLL(t)$ starting at the point $R$ contains the point~$R_{r+1}$, so that
\textcolor{white}{xxxxxxxxxxxxxxxxxxxxxxxxxxxxxx}
\begin{displaymath}
R_{r+1}=\LLL(t_{r+1})
\quad\mbox{for some $t_{r+1}$},
\end{displaymath}
where $t_{r+1}$ can be positive or negative.

To estimate $y_{r+1}$ from below, note that the point $R_{r+1}=w_{j'_{r+1}}(y_{r+1})$ is obtained from the point
$R^{(r+1)}=w_{j_{r+1}}(x_{r+1})$ by $k_r$ shifts under the reverse $\alpha$-flow to the point $R^{(r)}$
followed by another $k_{r+1}$ shifts under the reverse $\alpha$-flow from the point~$R^{(r)}$.
Each shift under the reverse $\alpha$-flow corresponds to a vertical descent of~$\alpha$, and so the total descent is $(k_r+k_{r+1})\alpha$.
It follows that
\begin{displaymath}
\{x_{r+1}-(k_r+k_{r+1})\alpha\}=y_{r+1},
\end{displaymath}
so there exists an integer $n_0$ such that
\begin{displaymath}
x_{r+1}-(k_r+k_{r+1})\alpha-n_0=y_{r+1}.
\end{displaymath}
Since $0<x_{r+1},y_{r+1}<1$, we then have
\begin{equation}\label{eq2.21}
\Vert(k_r+k_{r+1})\alpha\Vert=\Vert x_{r+1}-y_{r+1}\Vert\le x_{r+1}+y_{r+1},
\end{equation}
so that in view of Lemma~\ref{lem21}, \eqref{eq2.16}, \eqref{eq2.18} and \eqref{eq2.20}, we have
\begin{align}
y_{r+1}
&
\ge\Vert(k_r+k_{r+1})\alpha\Vert-x_{r+1}
\ge\frac{1}{(A+2)(k_r+k_{r+1})}-x_{r+1}
\nonumber
\\
&
\ge\left(\frac{1}{18s^2(A+2)^2}-c_1(A;s)\right)x_r.
\nonumber
\end{align}
It is clear from \eqref{eq2.7} that
\textcolor{white}{xxxxxxxxxxxxxxxxxxxxxxxxxxxxxx}
\begin{displaymath}
c_1(A;s)\le\frac{1}{36s^2(A+2)^2}.
\end{displaymath}
It follows that
\textcolor{white}{xxxxxxxxxxxxxxxxxxxxxxxxxxxxxx}
\begin{equation}\label{eq2.22}
y_{r+1}\ge\delta_1(A;s)x_r,
\end{equation}
where
\textcolor{white}{xxxxxxxxxxxxxxxxxxxxxxxxxxxxxx}
\begin{equation}\label{eq2.23}
\delta_1(A;s)=\frac{1}{36s^2(A+2)^2}.
\end{equation}

We now repeat this argument on the open interval $Q_{r+1}R_{r+1}=w_{j'_{r+1}}(0,y_{r+1})$.

In view of a suitable analog of Claim~3, we may assume that there exists an integer $k_{r+2}$ such that
\textcolor{white}{xxxxxxxxxxxxxxxxxxxxxxxxxxxxxx}
\begin{equation}\label{eq2.24}
1\le k_{r+2}\le\frac{9s^2(A+2)}{y_{r+1}}
\end{equation}
and the $k_{r+2}$-th shift under the reverse $\alpha$-flow of the open interval $Q_{r+1}R_{r+1}$ of length $y_{r+1}$ splits for the first time.

Suppose that the image of the open interval $Q_{r+1}R_{r+1}$ after the first $k_{r+2}$ shifts under the reverse $\alpha$-flow
now consists of a vertex of $\PPP$ and two intervals
\begin{displaymath}
w_{j'_{r+2}}(0,y_{r+2})
\quad\mbox{and}\quad
w_{\ell'_{r+2}}(1-y_{r+2}^*,1),
\end{displaymath}
where $y_{r+2}+y_{r+2}^*=y_{r+1}$.
Then we delete the bottom interval $w_{\ell'_{r+2}}(1-y_{r+2}^*,1)$, keep the top interval $w_{j'_{r+2}}(0,y_{r+2})$ and write
$Q_{r+2}R_{r+2}=w_{j'_{r+2}}(0,y_{r+2})$.
It then follows from our construction that the geodesic $\LLL(t)$ starting at the point $R$ contains the point~$R_{r+2}$, so that
\textcolor{white}{xxxxxxxxxxxxxxxxxxxxxxxxxxxxxx}
\begin{displaymath}
R_{r+2}=\LLL(t_{r+2})
\quad\mbox{for some $t_{r+2}$},
\end{displaymath}
where $t_{r+2}$ can be positive or negative.

To estimate $y_{r+2}$ from below, note that the point $R_{r+2}=w_{j'_{r+2}}(y_{r+2})$ is obtained from the point
$R^{(r+1)}=w_{j_{r+1}}(x_{r+1})$ by $k_r$ shifts under the reverse $\alpha$-flow to the point $R^{(r)}$
followed by another $k_{r+1}+k_{r+2}$ shifts under the reverse $\alpha$-flow from the point~$R^{(r)}$.
Each shift under the $\alpha$-flow corresponds to a vertical descent of~$\alpha$, and so the total descent is $(k_r+k_{r+1}+k_{r+2})\alpha$.
It follows that
\begin{displaymath}
\{x_{r+1}-(k_r+k_{r+1}+k_{r+2})\alpha\}=y_{r+2},
\end{displaymath}
so the analog of \eqref{eq2.21} is
\begin{equation}\label{eq2.25}
\Vert(k_r+k_{r+1}+k_{r+2})\alpha\Vert=\Vert x_{r+1}-y_{r+2}\Vert\le x_{r+1}+y_{r+2}.
\end{equation}
By \eqref{eq2.18}, \eqref{eq2.20}, \eqref{eq2.22}, \eqref{eq2.23} and \eqref{eq2.24}, we have
\begin{align}\label{eq2.26}
k_r+k_{r+1}+k_{r+2}
&
\le\frac{18s^2(A+2)}{x_r}+\frac{9s^2(A+2)}{y_{r+1}}
\nonumber
\\
&
\le\frac{18s^2(A+2)}{x_r}+\frac{324s^4(A+2)^3}{x_r}
\le\frac{648s^4(A+2)^3}{x_r}.
\end{align}
Combining \eqref{eq2.25} with Lemma~\ref{lem21}, \eqref{eq2.16} and \eqref{eq2.26} , we have
\begin{align}
y_{r+2}
&
\ge\Vert(k_r+k_{r+1}+k_{r+2})\alpha\Vert-x_{r+1}
\ge\frac{1}{(A+2)(k_r+k_{r+1}+k_{r+2})}-x_{r+1}
\nonumber
\\
&
\ge\left(\frac{1}{648s^4(A+2)^4}-c_1(A;s)\right)x_r.
\nonumber
\end{align}
It is clear from \eqref{eq2.7} that
\begin{displaymath}
c_1(A;s)\le\frac{1}{1296s^4(A+2)^4}=\frac{1}{(36s^2(A+2)^2)^2}.
\end{displaymath}
It follows that
\textcolor{white}{xxxxxxxxxxxxxxxxxxxxxxxxxxxxxx}
\begin{equation}\label{eq2.27}
y_{r+2}\ge\delta_2(A;s)x_r,
\end{equation}
where
\textcolor{white}{xxxxxxxxxxxxxxxxxxxxxxxxxxxxxx}
\begin{equation}\label{eq2.28}
\delta_2(A;s)=\frac{1}{1296s^4(A+2)^4}=\frac{1}{(36s^2(A+2)^2)^2}.
\end{equation}

We now repeat this argument on the open interval $Q_{r+2}R_{r+2}=w_{j'_{r+2}}(0,y_{r+2})$.

In view of a suitable analog of Claim~3, we may assume that there exists an integer $k_{r+3}$ such that
\textcolor{white}{xxxxxxxxxxxxxxxxxxxxxxxxxxxxxx}
\begin{equation}\label{eq2.29}
1\le k_{r+3}\le\frac{9s^2(A+2)}{y_{r+2}}
\end{equation}
and the $k_{r+3}$-th shift under the reverse $\alpha$-flow of the open interval $Q_{r+2}R_{r+2}$ of length $y_{r+2}$ splits for the first time.

Suppose that the image of the open interval $Q_{r+2}R_{r+2}$ after the first $k_{r+3}$ shifts under the reverse $\alpha$-flow
now consists of a vertex of $\PPP$ and two intervals
\begin{displaymath}
w_{j'_{r+3}}(0,y_{r+3})
\quad\mbox{and}\quad
w_{\ell'_{r+3}}(1-y_{r+3}^*,1),
\end{displaymath}
where $y_{r+3}+y_{r+3}^*=y_{r+2}$.
Then we delete the bottom interval $w_{\ell'_{r+3}}(1-y_{r+3}^*,1)$, keep the top interval $w_{j'_{r+3}}(0,y_{r+3})$ and write
$Q_{r+3}R_{r+3}=w_{j'_{r+3}}(0,y_{r+3})$.
It then follows from our construction that the geodesic $\LLL(t)$ starting at the point $R$ contains the point~$R_{r+3}$, so that
\textcolor{white}{xxxxxxxxxxxxxxxxxxxxxxxxxxxxxx}
\begin{displaymath}
R_{r+3}=\LLL(t_{r+3})
\quad\mbox{for some $t_{r+3}$},
\end{displaymath}
where $t_{r+3}$ can be positive or negative.

To estimate $y_{r+3}$ from below, note that the point $R_{r+3}=w_{j'_{r+3}}(y_{r+3})$ is obtained from the point
$R^{(r+1)}=w_{j_{r+1}}(x_{r+1})$ by $k_r$ shifts under the reverse $\alpha$-flow to the point $R^{(r)}$
followed by another $k_{r+1}+k_{r+2}+k_{r+3}$ shifts under the reverse $\alpha$-flow from the point~$R^{(r)}$.
Each shift under the $\alpha$-flow corresponds to a vertical descent of~$\alpha$,
and so the total descent is $(k_r+k_{r+1}+k_{r+2}+k_{r+3})\alpha$.
It follows that
\begin{displaymath}
\{x_{r+1}-(k_r+k_{r+1}+k_{r+2}+k_{r+3})\alpha\}=y_{r+3},
\end{displaymath}
so the analog of \eqref{eq2.21} and \eqref{eq2.25} is
\begin{equation}\label{eq2.30}
\Vert(k_r+k_{r+1}+k_{r+2}+k_{r+3})\alpha\Vert=\Vert x_{r+1}-y_{r+3}\Vert\le x_{r+1}+y_{r+3}.
\end{equation}
By \eqref{eq2.26}--\eqref{eq2.29}, we have
\begin{align}\label{eq2.31}
k_r+k_{r+1}+k_{r+2}+k_{r+3}
&
\le\frac{648s^4(A+2)^3}{x_r}+\frac{9s^2(A+2)}{y_{r+2}}
\nonumber
\\
&
\le\frac{648s^4(A+2)^3}{x_r}+\frac{11664s^6(A+2)^5}{x_r}
\nonumber
\\
&
\le\frac{23328s^6(A+2)^5}{x_r}.
\end{align}
Combining \eqref{eq2.30} with Lemma~\ref{lem21}, \eqref{eq2.16} and \eqref{eq2.31} , we have
\begin{align}
y_{r+3}
&
\ge\Vert(k_r+k_{r+1}+k_{r+2}+k_{r+3})\alpha\Vert-x_{r+1}
\nonumber
\\
&
\ge\frac{1}{(A+2)(k_r+k_{r+1}+k_{r+2}+k_{r+3})}-x_{r+1}
\nonumber
\\
&
\ge\left(\frac{1}{23328s^6(A+2)^6}-c_1(A;s)\right)x_r.
\nonumber
\end{align}
It is clear from \eqref{eq2.7} that
\begin{displaymath}
c_1(A;s)\le\frac{1}{46656s^6(A+2)^6}=\frac{1}{(36s^2(A+2)^2)^3}.
\end{displaymath}
It follows that
\textcolor{white}{xxxxxxxxxxxxxxxxxxxxxxxxxxxxxx}
\begin{equation}\label{eq2.32}
y_{r+3}\ge\delta_3(A;s)x_r,
\end{equation}
where
\textcolor{white}{xxxxxxxxxxxxxxxxxxxxxxxxxxxxxx}
\begin{equation}\label{eq2.33}
\delta_3(A;s)=\frac{1}{46656s^6(A+2)^6}=\frac{1}{(36s^2(A+2)^2)^3}.
\end{equation}

We now repeat this argument on the open interval $Q_{r+3}R_{r+3}=w_{j'_{r+3}}(0,y_{r+3})$.

And so on.

This shift process under the reverse $\alpha$-flow defines a sequence of top intervals
\begin{equation}\label{eq2.34}
Q_{r+i}R_{r+i}=w_{j'_{r+i}}(0,y_{r+i}),
\quad
i\ge1,
\end{equation}
with length
\textcolor{white}{xxxxxxxxxxxxxxxxxxxxxxxxxxxxxx}
\begin{equation}\label{eq2.35}
y_{r+i}\ge\delta_i(A;s)x_r,
\end{equation}
where
\textcolor{white}{xxxxxxxxxxxxxxxxxxxxxxxxxxxxxx}
\begin{equation}\label{eq2.36}
\delta_i(A;s)=\frac{1}{(36s^2(A+2)^2)^i},
\end{equation}
as long as we ensure that
\textcolor{white}{xxxxxxxxxxxxxxxxxxxxxxxxxxxxxx}
\begin{equation}\label{eq2.37}
c_1(A;s)\le\frac{1}{(36s^2(A+2)^2)^i}.
\end{equation}
Each interval in \eqref{eq2.34} arises when the $k_{r+i}$-th shift under the reverse $\alpha$-flow
of the open interval $Q_{r+i-1}R_{r+i-1}$ of length $y_{r+i-1}$ splits for the first time, and the integer $k_{r+i}$ satisfies
\textcolor{white}{xxxxxxxxxxxxxxxxxxxxxxxxxxxxxx}
\begin{equation}\label{eq2.38}
1\le k_{r+i}\le\frac{9s^2(A+2)}{y_{r+i-1}},
\end{equation}
with the convention that $y_r=x_r$.
Furthermore, the geodesic $\LLL(t)$ starting at the point $R$ contains the point~$R_{r+i}$, so that
\begin{displaymath}
R_{r+i}=\LLL(t_{r+i})
\quad\mbox{for some $t_{r+i}$},
\end{displaymath}
where $t_{r+i}$ may be positive or negative.

\begin{remark}
The assertions \eqref{eq2.35} and \eqref{eq2.36} can be proved easily by induction on the parameter~$i$.
For the initial cases $i=1,2,3$, see \eqref{eq2.22}, \eqref{eq2.23}, \eqref{eq2.27}, \eqref{eq2.28}, \eqref{eq2.32} and \eqref{eq2.33}.
\end{remark}

As there are only finitely many vertical edges in the polysquare surface~$\PPP$, there will at some point be \textit{edge repetition},
when there exist two integers $i_1$ and $i_2$ satisfying $1\le i_i<i_2$ such that the corresponding top intervals
\begin{displaymath}
Q_{r+i_1}R_{r+i_1}=w_{j'_{r+i_1}}(0,y_{r+i_1})
\quad\mbox{and}\quad
Q_{r+i_2}R_{r+i_2}=w_{j'_{r+i_2}}(0,y_{r+i_2})
\end{displaymath}
lying respectively on the vertical edges $w_{j'_{r+i_1}}$ and~$w_{j'_{r+i_2}}$, overlap.
Thus $j'_{r+i_1}=j'_{r+i_2}$.
Now suppose that $j^*$ is their common value.
Then
\begin{equation}\label{eq2.39}
Q_{r+i_1}R_{r+i_1}=w_{j^*}(0,y_{r+i_1})
\quad\mbox{and}\quad
Q_{r+i_2}R_{r+i_2}=w_{j^*}(0,y_{r+i_2})
\end{equation}
Furthermore, since there are precisely $s$ vertical edges on~$\PPP$, it follows that
\begin{equation}\label{eq2.40}
1\le i_1<i_2\le s+1.
\end{equation}
This means that we can take $i\le s+1$ in \eqref{eq2.37}, and explains our choice of the constant $c_1(A;s)$ given by \eqref{eq2.7}.

\begin{claim4}
Suppose that there exist integers $i_1$ and $i_2$ satisfying \eqref{eq2.40} such that the following conditions hold:

(1)
For every integer $i$ satisfying $1\le i\le i_2$, there exists an integer $k_{r+i}$ satisfying \eqref{eq2.38}
such that the top interval $Q_{r+i}R_{r+i}$ given by \eqref{eq2.34} arises when the $k_{r+i}$-th shift
under the reverse $\alpha$-flow of the open interval $Q_{r+i-1}R_{r+i-1}$ splits for the first time, where $Q_rR_r=Q^{(r)}R^{(r)}$.

(2)
For every integer $i$ satisfying $1\le i\le i_2$, the conditions \eqref{eq2.35} and \eqref{eq2.36} hold.

(3)
There exists an integer $j^*$ such that the condition \eqref{eq2.39} holds.

Then there is a visiting time $t^*$ such that $0<\vert t^*\vert\le\vert t_{r+i_2}-t_{r+i_1}\vert$ and $\LLL(t^*)\in QR$, where
$R_{r+i_1}=\LLL(t_{r+i_1})$ and $R_{r+i_2}=\LLL(t_{r+i_2})$, so the conclusion of Lemma~\ref{lem22}
holds with a suitable constant $c_0(A;s)$.
\end{claim4}

\begin{proof}[Justification of Claim~4]
Since $i_1<i_2$, we have $y_{r+i_1}>y_{r+i_2}$.
It follows from \eqref{eq2.39} that
\textcolor{white}{xxxxxxxxxxxxxxxxxxxxxxxxxxxxxx}
\begin{displaymath}
R_{r+i_2}=\LLL(t_{r+i_2})\in Q_{r+i_1}R_{r+i_1}.
\end{displaymath}
Applying the forward $\alpha$-flow for time $t_r-t_{r+i_1}$ then takes $Q_{r+i_1}R_{r+i_1}$ to~$Q^{(r)}R^{(r)}$,
and also $R_{r+i_2}=\LLL(t_{r+i_2})$ to $\LLL(t_r+t_{r+i_2}-t_{r+i_1})$, so that $\LLL(t_r+t_{r+i_2}-t_{r+i_1})\in Q^{(r)}R^{(r)}$.
Applying next the reverse $\alpha$-flow for time $t_r$ then takes $Q^{(r)}R^{(r)}$ to~$QR$,
and also $\LLL(t_r+t_{r+i_2}-t_{r+i_1})$ to $\LLL(t_{r+i_2}-t_{r+i_1})$, so that $\LLL(t_{r+i_2}-t_{r+i_1})\in QR$.
This justifies the first assertion in Claim~4.
Next, note that the open interval $Q_{r+i_2}R_{r+i_2}$ arises as a consequence of
\begin{align}
k_{r+i_1+1}+\ldots+k_{r+i_2}
&
\le\sum_{i=0}^s\frac{9s^2(A+2)}{y_{r+i}}
\le\sum_{i=0}^s\frac{9s^2(A+2)}{\delta_i(A;s)x_r}
\nonumber
\\
&
\le\frac{1}{x}\sum_{i=0}^s\frac{9s^2(A+2)(36s^2(A+2)^2)^i}{(c_1(A;s))^s}
\nonumber
\end{align}
consecutive shifts under the reverse $\alpha$-flow of the open interval~$Q_{r+i_1}R_{r+i_1}$, using \eqref{eq2.17}, \eqref{eq2.35} and \eqref{eq2.36}.
Since each shift under the reverse $\alpha$-flow corresponds to a geodesic segment of length $\sqrt{1+\alpha^2}\le\sqrt{2}$, it follows that
\begin{displaymath}
\vert t_{r+i_2}-t_{r+i_1}\vert\le\frac{1}{x}\sum_{i=0}^s\frac{9\sqrt{2}s^2(A+2)(36s^2(A+2)^2)^i}{(c_1(A;s))^s}.
\end{displaymath}
If the constant $c_0(A;s)$ in Lemma~\ref{lem22} is chosen to satisfy
\begin{equation}\label{eq2.41}
c_0(A;s)\ge\sum_{i=0}^s\frac{9\sqrt{2}s^2(A+2)(36s^2(A+2)^2)^i}{(c_1(A;s))^s},
\end{equation}
then the conclusion of Lemma~\ref{lem22} holds.
\end{proof}

Lemma~\ref{lem22} now follows if we choose $c_0(A;s)$ sufficiently large to satisfy \eqref{eq2.15} and \eqref{eq2.41}.
\end{proof}

We have the following simple corollary of Lemma~\ref{lem22}.

\begin{lem}\label{lem23}
Under the hypotheses of Lemma~\ref{lem22}, the distance between the point $\LLL(t_0)$ and either endpoint $Q$ or $R$ is at least
\begin{displaymath}
\frac{x}{(A+2)c_0(A;s)}.
\end{displaymath}
\end{lem}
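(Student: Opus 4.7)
The plan is to link the distance from $\LLL(t_0)$ to each endpoint with the badly approximable bound of Lemma~\ref{lem21} via the integer-shift structure of the polysquare $\alpha$-flow. First, by Lemma~\ref{lem22}, fix $t_0$ with $0 < |t_0| \le c_0(A;s)/x$ and $\LLL(t_0) \in QR$. Since $R = \LLL(0) = w_{i_0}(y)$ and $\LLL(t_0) = w_{i_0}(y')$ both lie on the same vertical edge $w_{i_0}$, the horizontal distance traversed by the geodesic over time $|t_0|$ must be an integer number $k \ge 1$ of unit widths, and hence $k = |t_0|/\sqrt{1+\alpha^2} \le c_0(A;s)/x$. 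The polysquare identifications, exploited throughout the proof of Lemma~\ref{lem22} via formulas like $Q_j = w_{i_j}(\{z + j\alpha\})$, give $y' \equiv y \pm k\alpha \pmod 1$. Assuming without loss of generality that $x \le 1/2$, this means $|y - y'| = \Vert k\alpha\Vert$.

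Applying Lemma~\ref{lem21} then delivers
\[
|y - y'| = \Vert k\alpha\Vert > \frac{1}{(A+2)k} \ge \frac{x}{(A+2) c_0(A;s)},
\]
which is the desired lower bound for the distance from $\LLL(t_0)$ to $R$. For the distance from $\LLL(t_0)$ to $Q = w_{i_0}(y - x)$, I would trace the construction of $t_0$ in the proof of Lemma~\ref{lem22}. In both the Claim 2 branch (edge repetition of forward top intervals $Q^{(i)}R^{(i)}$) and the Claim 4 branch (edge repetition of top intervals $Q_{r+i}R_{r+i}$ under reverse flow), $\LLL(t_0)$ is the isometric image, back in $QR$, of the top endpoint $R^{(i_2)}$ (respectively $R_{r+i_2}$) of a smaller top interval of length $x_{i_2}$ (respectively $y_{r+i_2}$) sitting inside a larger top interval of length $x_{i_1}$ (respectively $y_{r+i_1}$) on a common vertical edge.

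A direct computation then shows $\LLL(t_0) = w_{i_0}(y - (x_{i_1} - x_{i_2}))$ (respectively $w_{i_0}(y - (y_{r+i_1} - y_{r+i_2}))$), so the distance to $Q$ is $x - (x_{i_1} - x_{i_2}) \ge x_{i_2}$ (respectively $x - (y_{r+i_1} - y_{r+i_2}) \ge y_{r+i_2}$), using $x_{i_1} \le x$ (respectively $y_{r+i_1} \le x_r \le x$). Iterating \eqref{eq2.12} with $i_2 \le s+1$, and combining \eqref{eq2.35}--\eqref{eq2.36} with \eqref{eq2.17} and $r \le s$, yields in either case a bound of the form $c_1(A;s)^{s+1} x$. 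To match this with the target $x/((A+2) c_0(A;s))$, I would impose the additional condition $c_0(A;s) \ge 1/((A+2) c_1(A;s)^{s+1})$ on the constant $c_0(A;s)$; this is compatible with the conditions \eqref{eq2.15} and \eqref{eq2.41} already required, and keeps $c_0(A;s)$ as an explicit function of $A$ and $s$.

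The main obstacle is the careful bookkeeping of the position of $\LLL(t_0)$ inside $QR$ across both Claim 2 and Claim 4 branches of the Lemma~\ref{lem22} construction; once this is done, the distance to $R$ follows at once from Lemma~\ref{lem21} and the distance to $Q$ follows purely geometrically.
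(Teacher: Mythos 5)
Your argument for the distance from $\LLL(t_0)$ to $R$ matches the paper's one-line proof exactly: the vertical displacement is $\Vert k\alpha\Vert$ for some nonzero integer $k$ with $|k|\le c_0(A;s)/x$, and Lemma~\ref{lem21} does the rest.

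For the distance to $Q$, however, you take a genuinely different route from the paper, and that route has a gap. The paper says only ``a symmetry argument,'' whereas you trace the actual location of $\LLL(t_0)$ inside $QR$ through the construction of $t^*$ in the proof of Lemma~\ref{lem22}. Your bookkeeping for the Claim~2 and Claim~4 branches is correct: in Claim~2 the pullback of $R^{(i_2)}$ to $QR$ sits at height $y-(x_{i_1}-x_{i_2})$, so the distance to $Q$ is $x-x_{i_1}+x_{i_2}\ge x_{i_2}\ge (c_1(A;s))^{s+1}x$ via \eqref{eq2.12}, and the Claim~4 branch works out analogously via \eqref{eq2.17}, \eqref{eq2.35} and \eqref{eq2.36}; enlarging $c_0(A;s)$ so that $c_0(A;s)\ge 1/\bigl((A+2)(c_1(A;s))^{s+1}\bigr)$ is also unproblematic since $c_0(A;s)$ is only required to be sufficiently large. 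But these two branches are not the only ways $t^*$ can arise in the proof of Lemma~\ref{lem22}. The value $t^*$ can also be produced by Claim~1 itself, by Claim~3, or by any of the analogues of Claim~1 that are invoked repeatedly (``In view of another suitable analog of Claim~1, we may assume\ldots''). In all of those branches the construction of $t^*$ is quite different: one pulls back an overlap of two \emph{full-length} translates $Q_{j_{\nu'}}R_{j_{\nu'}}$ and $Q_{j_{\nu''}}R_{j_{\nu''}}$, so $\LLL(t_0)$ lands at height $y-(u_{\nu'}-u_{\nu''})$, and the distance to $Q$ is $x-(u_{\nu'}-u_{\nu''})$. Lemma~\ref{lem21} gives a \emph{lower} bound on $u_{\nu'}-u_{\nu''}$, which is the wrong direction for bounding $x-(u_{\nu'}-u_{\nu''})$ from below, and the pair $(\nu',\nu'')$ is chosen in Claim~1 merely to satisfy \eqref{eq2.1}, not to keep $u_{\nu'}-u_{\nu''}$ away from $x$. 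So your geometric formula for the distance to $Q$ simply does not apply there, and an additional argument (a more careful choice of $(\nu',\nu'')$, or some version of the paper's implicit ``symmetry'') is needed to close the Claim~1/Claim~3 branches before the lemma is fully proved.
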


\begin{proof}
Note that the vertical distance between $R$ and $\LLL(t_0)$ is of the form
\begin{displaymath}
\{n\alpha\}\ge\Vert n\alpha\Vert\ge\frac{1}{(A+2)n},
\end{displaymath}
where $n$ is the number of shifts under the $\alpha$-flow from $R$ to~$\LLL(t_0)$, using Lemma~\ref{lem21}.
On the other hand, it is clear that $n\le c_0(A;s)/x$.
A corresponding lower bound for the vertical distance between $Q$ and $\LLL(t_0)$ comes via a symmetry argument.
\end{proof}

For convenience, we write
\begin{displaymath}
c_2(A;s)=\frac{1}{(A+2)c_0(A;s)}.
\end{displaymath}
Note that this is a very small constant depending at most on $A$ and~$s$.

Let $w$ be the left vertical edge of a fixed atomic square of the polysquare surface~$\PPP$.
Consider a finite segment
\begin{equation}\label{eq2.42}
\Gamma(\sigma;T)=\{\LLL(t):0\le\vert\sigma-t\vert\le T\},
\end{equation}
centered at~$\sigma$, of the geodesic $\LLL(t)$ of slope~$\alpha$.
Suppose that this segment intersects $w$ at $N=N(T)$ points.
Denote these points by
\begin{displaymath}
w(y_i)=\LLL(t_i)\in w,
\quad
1\le i\le N=N(T),
\end{displaymath}
and arrange them in increasing order
\begin{equation}\label{eq2.43}
0<y_{i_1}<y_{i_2}<\ldots<y_{i_N}<1.
\end{equation}
We then define the maximum gap of \eqref{eq2.43} by
\begin{equation}\label{eq2.44}
\mg(\Gamma(\sigma;T;w))=\max_{0\le n\le N}(y_{i_{n+1}}-y_{i_n}),
\end{equation}
with the convention that $y_{i_0}=0$ and $y_{i_{N+1}}=1$.
Using this concept, we can establish an extension to Lemma~\ref{lem23} as follows.

\begin{lem}\label{lem24}
For any finite segment $\Gamma(\sigma;T)$, given by \eqref{eq2.42}, of a geodesic $\LLL(t)$ of badly approximable slope~$\alpha$, let
$\mg(\Gamma(\sigma;T;w))=x$.
Then the longer finite segment
\textcolor{white}{xxxxxxxxxxxxxxxxxxxxxxxxxxxxxx}
\begin{displaymath}
\Gamma\left(\sigma;T+\frac{c_0(A;s)}{x}\right)=\left\{\LLL(t):0\le\vert\sigma-t\vert\le T+\frac{c_0(A;s)}{x}\right\}
\end{displaymath}
has the property that
\begin{displaymath}
\mg\left(\Gamma\left(\sigma;T+\frac{c_0(A;s)}{x};w\right)\right)\le(1-c_2(A;s))x.
\end{displaymath}
\end{lem}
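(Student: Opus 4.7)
The plan is to apply Lemma~\ref{lem22} to every gap on $w$ of length strictly greater than $(1-c_2(A;s))x$, producing inside each such gap a new geodesic crossing of $w$ that lies in the extended segment, and then to use Lemma~\ref{lem23} to conclude that each new crossing is sufficiently far from the two endpoints of its gap for the resulting split to be balanced.

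Concretely, I would fix a gap $QR$ on $w$ of length $x_*$ with $(1-c_2(A;s))x < x_* \le x$, and write $R=\LLL(t_R)$ with $t_R\in[\sigma-T,\sigma+T]$ for its top endpoint. Reparameterizing the geodesic by $\tilde\LLL(t)=\LLL(t+t_R)$ and applying Lemma~\ref{lem22} then yields a time $t^*$ satisfying $0<|t^*|\le c_0(A;s)/x_*$ and $\LLL(t_R+t^*)\in QR$. By Lemma~\ref{lem23}, this new crossing lies at vertical distance at least $c_2(A;s)\,x_*$ from each of $Q$ and $R$, and hence splits $QR$ into two sub-gaps each of length at most $(1-c_2(A;s))\,x_*\le(1-c_2(A;s))\,x$. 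Every remaining gap on $w$ has length at most $(1-c_2(A;s))\,x$ by choice of threshold, so the new value of $\mg$ does not exceed $(1-c_2(A;s))\,x$, which is the desired conclusion.

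The main obstacle is verifying that the new crossing at time $t_R+t^*$ actually lies in the extended segment $[\sigma-T-c_0(A;s)/x,\,\sigma+T+c_0(A;s)/x]$. For the maximal gap with $x_*=x$ this is automatic, because $|t^*|\le c_0(A;s)/x$ guarantees $t_R+t^*$ is within the extension. For non-maximal gaps with $x_*<x$, however, the bound $|t^*|\le c_0(A;s)/x_*$ can exceed $c_0(A;s)/x$, and one must exploit slack from the interior position of $R$ in the original segment to fit the new crossing inside the extension. This step requires the most care, but is feasible because $c_2(A;s)$ is a very small constant, so the excess $c_0(A;s)/x_* - c_0(A;s)/x$ remains negligible relative to the length of the extension; the remainder of the argument is then purely a matter of bookkeeping with Lemmas~\ref{lem22} and~\ref{lem23}.
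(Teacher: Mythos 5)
The paper gives no explicit proof of Lemma~\ref{lem24}; it is stated as an ``extension'' of Lemma~\ref{lem23} and then immediately iterated to obtain Lemma~\ref{lem25}. Your plan --- apply Lemma~\ref{lem22} to each over-long gap and use Lemma~\ref{lem23} to see that the resulting split is balanced --- is the natural and evidently intended route, and your step~2 (that a crossing at vertical distance at least $c_2(A;s)x_*$ from both $Q$ and $R$ cuts a gap of length $x_*$ into two sub-gaps each of length at most $(1-c_2(A;s))x_*$) is correct.

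However, the issue you flag at the end is a genuine gap, not just bookkeeping, and your proposal leaves it unresolved. For a gap of length $x_*$ with $(1-c_2)x<x_*<x$, Lemma~\ref{lem22} only guarantees $|t^*|\le c_0/x_*$, which can exceed $c_0/x$ by an amount $c_0c_2/((1-c_2)x)$. If $t_R$ happens to lie at (or near) an endpoint of $[\sigma-T,\sigma+T]$, there is no ``slack from the interior position of $R$'' to absorb this excess, so the new crossing $\LLL(t_R+t^*)$ may land strictly outside $\Gamma(\sigma;T+c_0/x)$, and the gap is not split. The sign of $t^*$ is not under your control (it is a two-direction visiting time), so you cannot push the crossing into the interior. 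The clean repair is to prove the lemma with the extension $c_0/((1-c_2)x)$ in place of $c_0/x$, equivalently to replace $c_0$ by $c_0/(1-c_2)$ in the statement; this makes the excess disappear, and since Lemma~\ref{lem24} is only used, via iteration, to establish Lemma~\ref{lem25} (whose constant $c_3(A;s)$ is unspecified) and then Theorem~\ref{thm1}, the adjustment is harmless downstream. A second, smaller omission: the two extreme gaps of $w$, abutting $w(0)$ and $w(1)$ (which are vertices of $\PPP$ and hence not geodesic crossings), have only one endpoint on the geodesic. For the gap ending at $w(1)$ one must launch the argument from its lower endpoint $Q$, which requires the vertically reflected analogue of Lemma~\ref{lem22} (with $Q$ playing the role of the starting point and ``bottom intervals'' replacing ``top intervals''); you should say this explicitly, since Lemma~\ref{lem22} as stated starts at the top endpoint.
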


Iterating Lemma~\ref{lem24} sufficiently many times, we obtain the following.

\begin{lem}\label{lem25}
Under the hypotheses of Lemma~\ref{lem24}, there exists a positive constant $c_3(A;s)$ such that the longer finite segment
\begin{displaymath}
\Gamma\left(\sigma;T+\frac{c_3(A;s)}{x}\right)=\left\{\LLL(t):0\le\vert\sigma-t\vert\le T+\frac{c_3(A;s)}{x}\right\}
\end{displaymath}
has the property that
\begin{displaymath}
\mg\left(\Gamma\left(\sigma;T+\frac{c_3(A;s)}{x};w\right)\right)\le\frac{x}{2}.
\end{displaymath}
\end{lem}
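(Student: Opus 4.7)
The plan is to iterate Lemma~\ref{lem24} a bounded number of times, shrinking the max gap by the fixed factor $(1-c_2(A;s))$ at each application, and to stop as soon as the max gap has descended below $x/2$. Since only $K=K(A;s)$ iterations are needed for $(1-c_2(A;s))^K\le1/2$, and since during those iterations the current max gap stays above $x/2$, each individual extension remains of order $1/x$, so the total extension is of order $1/x$ as well.

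More precisely, I would set $x_0=x$ and let $x_i$ denote the max gap on $w$ of the segment obtained after $i$ successive applications of Lemma~\ref{lem24}. The $i$-th application extends the preceding segment by $c_0(A;s)/x_{i-1}$ in time and, by Lemma~\ref{lem24}, yields
\begin{displaymath}
x_i\le(1-c_2(A;s))x_{i-1},
\quad\mbox{hence}\quad
x_i\le(1-c_2(A;s))^ix.
\end{displaymath}
Choose $K=K(A;s)$ to be the smallest positive integer with $(1-c_2(A;s))^K\le1/2$, which depends only on $A$ and $s$ through $c_2(A;s)$. Let $k\le K$ be the first index at which $x_k\le x/2$; such an index exists by the geometric decay.

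For every $i$ with $0\le i<k$, the minimality of $k$ forces $x_i>x/2$, so the extension at the $(i+1)$-st step satisfies $c_0(A;s)/x_i<2c_0(A;s)/x$. Summing over the $k\le K$ applications, the total time added to $T$ is strictly less than $2Kc_0(A;s)/x$. Setting $c_3(A;s)=2Kc_0(A;s)$, the segment $\Gamma(\sigma;T+c_3(A;s)/x)$ contains the iteratively-constructed segment whose max gap is at most $x/2$; by the trivial monotonicity of max gap under segment extension (more intersection points with $w$ can only subdivide existing gaps), the larger segment itself has max gap at most $x/2$, giving the conclusion of Lemma~\ref{lem25}.

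I expect no substantial obstacle: the argument is essentially a geometric-series iteration. The one point requiring care is the bookkeeping, since the extension length at step $i{+}1$ is $c_0(A;s)/x_i$ and the $x_i$ decrease geometrically, so a naive sum would have a divergent tail. The stopping rule---halting the instant the max gap first drops below $x/2$---is precisely what forces each of the $k\le K$ individual extensions to cost at most $2c_0(A;s)/x$, and thereby bounds the cumulative extension by $c_3(A;s)/x$ with $c_3(A;s)$ depending only on $A$ and $s$.
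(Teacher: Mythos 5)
Your proof is correct and supplies exactly what the paper leaves implicit: the paper presents Lemma~\ref{lem25} with only the remark ``Iterating Lemma~\ref{lem24} sufficiently many times,'' and your stopping rule---halting at the first index $k$ where the max gap drops to or below $x/2$, which forces every one of the at most $K(A;s)$ extensions to be at most $2c_0(A;s)/x$---is precisely the bookkeeping needed to keep the cumulative extension of order $1/x$ with a constant depending only on $A$ and $s$. This is the natural and evidently intended route, and the rest of your argument (geometric decay of the max gap under Lemma~\ref{lem24}, monotonicity of $\mg$ under segment enlargement) is sound.
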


\begin{proof}[Proof of Theorem~\ref{thm1}]
Superdensity of a geodesic with badly approximable slope on a polysquare surface $\PPP$
is a straightforward deduction from the \textit{discrete} superdensity of intersection points on any fixed vertical edge $w$ of~$\PPP$,
so it remains to establish the latter.

Suppose that a half-infinite geodesic $\LLL(t)$ with badly approximable slope $\alpha$ visits a point $R$ of~$w$, where
\textcolor{white}{xxxxxxxxxxxxxxxxxxxxxxxxxxxxxx}
\begin{displaymath}
R=w(y)=\LLL\left(\frac{M}{2}\right),
\end{displaymath}
where $M>0$ is sufficiently large.
Note that this allows us to move forward and backward in time from $R$ by up to $M/2$ and still stay within the interval $(0,M)$.

Consider the finite segment $\Gamma(\sigma;T)$, given by \eqref{eq2.42}, with $\sigma=M/2$ and $T=0$,
so that $\Gamma(\sigma;T)$ contains precisely one point~$R$.
Then $\mg(\Gamma(\sigma;T;w))\le1$.
On applying Lemma~\ref{lem25} with $x=1$, we deduce that
\begin{displaymath}
\mg\left(\Gamma\left(\frac{M}{2};\frac{c_3(A;s)}{1};w\right)\right)\le\frac{1}{2}.
\end{displaymath}
Using this and applying Lemma~\ref{lem25} again with $x=1/2$, we deduce that
\begin{displaymath}
\mg\left(\Gamma\left(\frac{M}{2};c_3(A;s)+\frac{c_3(A;s)}{1/2};w\right)\right)\le\frac{1}{4}.
\end{displaymath}
Using this and applying Lemma~\ref{lem25} again with $x=1/4$, we deduce that
\begin{displaymath}
\mg\left(\Gamma\left(\frac{M}{2};3c_3(A;s)+\frac{c_3(A;s)}{1/4};w\right)\right)\le\frac{1}{8}.
\end{displaymath}
And so on.
In general, we have
\begin{displaymath}
\mg\left(\Gamma\left(\frac{M}{2};(2^n-1)c_3(A;s);w\right)\right)\le\frac{1}{2^n}
\end{displaymath}
for every integer $n\ge1$ such that $(2^n-1)c_3(A;s)\le M/2$.
This clearly proves superdensity of the intersection points on any vertical edge $w$ of~$\PPP$, and completes the proof of Theorem~\ref{thm1}.
\end{proof}

%
%

\section{Adaptation to the regular octagon surface}\label{sec3}

As mentioned in Section~\ref{sec1}, $1$-direction geodesic flow on a finite polysquare surface \textit{modulo one}
becomes a torus line flow on $[0,1)^2$.
We can view this observation as a lucky reduction, since torus line flow on $[0,1)^2$ gives rise to an integrable system
with a basically complete theory.

We now study flat systems that do not enjoy such lucky reduction,
and begin with arguably the simplest non-integrable billiard in the $\pi/8$ right triangle.
Applying the well known technique of unfolding, this can be shown to be equivalent
to the problem of linear flow on the regular octagon surface; see Figure~3.1.

\begin{displaymath}
\begin{array}{c}
\includegraphics{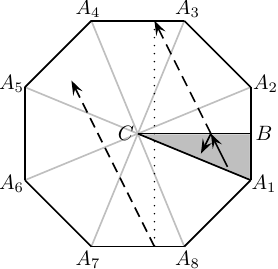}
\\
\mbox{Figure 3.1: billiard in the $\pi/8$ right triangle and the equivalent problem}
\\
\mbox{of linear flow on the regular octagon surface}
\end{array}
\end{displaymath}

To turn the regular octagon region into the regular octagon surface, we identity opposite parallel edges, so that there are $4$ pairs
\begin{displaymath}
(A_1A_2,A_6A_5),
\quad
(A_2A_3,A_7A_6),
\quad
(A_3A_4,A_8A_7),
\quad
(A_4A_5,A_1A_8)
\end{displaymath}
of identified edges.

We obtain a $1$-direction geodesic flow on this compact orientable surface, which is a $16$-fold covering
of a billiard orbit in the $\pi/8$ right triangle region.
The unfolding process goes as follows.
Let $C$ denote the centre of the octagon.
The right triangle $A_1BC$ has angle $\pi/8$ at the vertex~$C$.
Reflecting the triangle $A_1BC$ across the side $BC$ is the first step of the unfolding process,
and gives rise to the image $A_2BC$ which together with the original triangle $A_1BC$ forms the triangle $A_1A_2C$.
We now reflect the triangle $A_1A_2C$ across the side $A_2C$ to obtain the image $A_3A_2C$,
then reflect the triangle $A_2A_3C$ across the side $A_3C$ to obtain the image $A_4A_3C$, and so on,
until we end up with the regular octagon.
Non-integrability is clear from the unfolding,
since the vertices of the octagon are split-singularities of the $1$-direction geodesic flow on the surface.

The same argument can be applied to billiard in the $\pi/n$ right triangle for every even integer $n\ge4$,
and we can show that this is equivalent to the problem of linear flow on the regular $n$-gon surface,
obtained from the regular $n$-gon region by identifying opposite parallel edges, so that there are $n/2$ pairs of identified edges.
These are non-integrable systems for every even integer $n\ge8$.
The cases $n=4$ and $n=6$ give rise to integrable systems.

\begin{remark}
Consider the regular $n$-gon surface with even integer $n\ge4$.
If $n$ is divisible by~$4$, then boundary identification gives rise to $1$ vertex, $n/2$ edges and $1$ region, so it follows from Euler's formula
\begin{displaymath}
2-2g=V-E+R=1-\frac{n}{2}+1
\end{displaymath}
that the genus $g=n/4$.
If $n$ is not divisible by~$4$, then boundary identification gives rise to $2$ vertices, $n/2$ edges and $1$ region, so it follows from Euler's formula
\begin{displaymath}
2-2g=V-E+R=2-\frac{n}{2}+1
\end{displaymath}
that the genus $g=(n-2)/4$.
Thus the genus $g=1$ when $n=4$ or $n=6$, for each of which the geodesic flow is integrable,
consistent with the well known fact that we can tile the plane with squares or regular hexagons.
On the other hand, the genus $g>1$ when $n\ge8$, consistent with the well known fact that
we cannot tile the plane with regular $n$-gons when the even integer $n\ge8$.
\end{remark}

Let us return to the regular octagon surface.
While it looks completely different from a polysquare surface, there is a hidden similarity.
The regular octagon surface is in fact equivalent to a \textit{polyrectangle surface}; see Figure~3.2.

\begin{displaymath}
\begin{array}{c}
\includegraphics{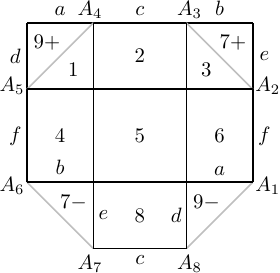}
\\
\mbox{Figure 3.2: the regular octagon surface viewed as a polyrectangle surface}
\end{array}
\end{displaymath}

The edge $A_2A_3$ is identified with the edge~$A_7A_6$.
This allows us to replace the triangle labelled $7-$ by the triangle labelled $7+$, with the two horizontal edges $b$
identified and the two vertical edges $e$ identified.
Likewise, the edge $A_4A_5$ is identified with the edge~$A_1A_8$.
This allows us to replace the triangle labelled $9-$ by the triangle labelled $9+$, with the two horizontal edges $a$
identified and the two vertical edges $d$ identified.
Thus the regular octagon surface becomes a polyrectangle surface consisting of $7$ rectangles, labelled $(1,9+),2,(3,7+),4,5,6,8$.
With the edge identification, this polyrectangle surface has $2$ horizontal streets
\begin{equation}\label{eq3.1}
(1,9+),2,(3,7+),8
\quad\mbox{and}\quad
4,5,6.
\end{equation}
Furthermore, if we assume that the first horizontal street in \eqref{eq3.1} has rectangles with vertical edges of length~$1$,
then the second horizontal street in \eqref{eq3.1} has rectangles with vertical edges of length~$\sqrt{2}$.

Let us relabel the vertical edges of the polyrectangle surface by $w_i$, $i=1,\ldots,7$, as shown in Figure~3.3.

As in Section~\ref{sec2}, we let $w_i=w_i[0,1]$, $i=1,2,3,4$,
denote the parametrizations of the $4$ vertical edges of the first horizontal street,
with $0$ denoting the bottom endpoint and $1$ denoting the top endpoint, and $w_i=w_i[0,\sqrt{2}]$, $i=5,6,7$,
denote the parametrizations of the $3$ vertical edges of the second horizontal street,
with $0$ denoting the bottom endpoint and $\sqrt{2}$ denoting the top endpoint.

\begin{displaymath}
\begin{array}{c}
\includegraphics{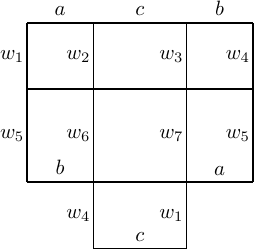}
\\
\mbox{Figure 3.3: the vertical edges of the regular octagon surface}
\\
\mbox{represented as a polyrectangle surface}
\end{array}
\end{displaymath}

Like before, we consider a subinterval $S$ on a vertical edge of the polyrectangle surface.
The $\alpha$-flow shifts $S$ until it hits some other vertical edge or edges of the surface for the first time, with image~$S(\alpha)$, say.
Likewise, we can look at the effect of such a shift under the $\alpha$-flow on a point $w_i(y)\in w_i$, where $i=1,\ldots,7$.

In view of symmetry, we may assume, without loss of generality, that $0<\alpha<1$.

It is clear that the upward vertical travel under such a shift is either $\alpha$ or~$\sqrt{2}\alpha$, depending on the edge where the shift begins.
Thus the total upward vertical travel after $n^*$ successive such shifts is given by
\begin{displaymath}
n_1\alpha+n_2\sqrt{2}\alpha,
\end{displaymath}
where $n_1$ denotes the number of shifts from the edges $w_1,w_3,w_5,w_7$ and $n_2$
denotes the number of shifts from the edges $w_2,w_4,w_6$, so that
\begin{displaymath}
n_1,n_2\ge0
\quad\mbox{and}\quad
n_1+n_2=n^*.
\end{displaymath}
Let $n_3$ denote the total number of times when a shift moves a point on a long vertical edge $w_5,w_6,w_7$
to a point on a short vertical edge $w_1,w_2,w_3,w_4$, so that
\begin{displaymath}
0\le n_3\le n^*.
\end{displaymath}
Suppose now that these $n^*$ successive shifts take us from some point $w_{i'}(y)\in w_{i'}$ to some point $w_{i''}(y^*)\in w_{i''}$.
Then there exists some integer $n_4$ such that
\begin{displaymath}
y+n_1\alpha+n_2\sqrt{2}\alpha\in
\left\{\begin{array}{ll}
{[n_3\sqrt{2}+n_4,n_3\sqrt{2}+n_4+1)},
&\mbox{if $i''=1,2,3,4$},\\
{[n_3\sqrt{2}+n_4,n_3\sqrt{2}+n_4+\sqrt{2})},
&\mbox{if $i''=5,6,7$},
\end{array}\right.
\end{displaymath}
It is then absolutely clear that
\begin{displaymath}
y+n_1\alpha+n_2\sqrt{2}\alpha=y^*+n_3\sqrt{2}+n_4.
\end{displaymath}
This leads to the crucial expression
\begin{equation}\label{eq3.2}
y^*-y=n_1\alpha+n_2\sqrt{2}\alpha-n_3\sqrt{2}-n_4,
\end{equation}
and this allows us to make use of the quantity
\begin{displaymath}
\Vert n_1\alpha+n_2\sqrt{2}\alpha-n_3\sqrt{2}\Vert,
\end{displaymath}
where $0\le n_1,n_2,n_3\le n^*$ and $n_1+n_2\ge1$.

For the remainder of this section, we assume that the slope $\alpha$ satisfies
\begin{displaymath}
0<\alpha=\frac{\root{3}\of{3}}{2}<1,
\end{displaymath}
and consider the algebraic number field
\begin{displaymath}
K=\Qq(\sqrt{2},\root{3}\of{3}),
\end{displaymath}
of degree~$6$, which is the extension of $\Qq$ by $\sqrt{2}$ and $\root{3}\of{3}=2\alpha$.
Note that $K$ contains the elements $\alpha$, $\sqrt{2}$ and~$\sqrt{2}\alpha$.

Suppose that for some integer~$n_0$,
\begin{displaymath}
\Vert n_1\alpha+n_2\sqrt{2}\alpha-n_3\sqrt{2}\Vert=\vert n_1\alpha+n_2\sqrt{2}\alpha-n_3\sqrt{2}-n_0\vert=\omega,
\end{displaymath}
so that $0<\omega\le1/2$.
Then there exists a constant $c_4=c_4(\alpha)>0$ such that $\gamma=c_4\omega$ is an algebraic integer in~$K$.
Let $\gamma_1,\ldots,\gamma_6$, denote the conjugates of $\gamma$ in~$K$, with $\gamma_1=\gamma$.
Then the norm of~$\gamma$, given by $N(\gamma)=\gamma_1\ldots\gamma_6$, is a non-zero integer in~$\Zz$.
Thus $\vert N(\gamma)\vert\ge1$, and this implies that
\begin{displaymath}
\omega=\frac{\gamma}{c_4}\ge\frac{1}{c_4\vert\gamma_2\ldots\gamma_6\vert}.
\end{displaymath}
This leads to the following analog of Lemma~\ref{lem21}.

\begin{lem}\label{lem31}
Let $\alpha=\root{3}\of{3}/2$.
Then there exists a constant $c_5=c_5(\alpha)>0$ such that for any integers $n_1,n_2,n_3$ with $n_1^2+n_2^2\ge1$, we have
\begin{displaymath}
\Vert n_1\alpha+n_2\sqrt{2}\alpha-n_3\sqrt{2}\Vert>\frac{c_5}{N^5},
\end{displaymath}
where $N=\max\{\vert n_1\vert,\vert n_2\vert,\vert n_3\vert\}$.
\end{lem}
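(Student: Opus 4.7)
The plan is to follow the Liouville-style route that the excerpt already sets up in the paragraph preceding Lemma~\ref{lem31}, making the estimate explicit. I would start by fixing the nearest integer. Given integers $n_1,n_2,n_3$ with $n_1^2+n_2^2\ge1$, choose the integer $n_0$ realizing the distance to the nearest integer, so that
\begin{displaymath}
\omega=\Vert n_1\alpha+n_2\sqrt{2}\alpha-n_3\sqrt{2}\Vert=\vert n_1\alpha+n_2\sqrt{2}\alpha-n_3\sqrt{2}-n_0\vert\le\tfrac12.
\end{displaymath}
In particular $\vert n_0\vert\le(\vert n_1\vert+\vert n_2\vert\sqrt{2})\alpha+\vert n_3\vert\sqrt{2}+\tfrac12\le c_6 N$ for an absolute constant $c_6$ depending only on $\alpha$ and $\sqrt{2}$.

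Next, clear the denominator $2$ coming from $\alpha=\sqrt[3]{3}/2$ by setting
\begin{displaymath}
\gamma=2(n_1\alpha+n_2\sqrt{2}\alpha-n_3\sqrt{2}-n_0)=n_1\sqrt[3]{3}+n_2\sqrt{2}\sqrt[3]{3}-2n_3\sqrt{2}-2n_0,
\end{displaymath}
which is a $\Zz$-linear combination of the algebraic integers $1,\sqrt{2},\sqrt[3]{3},\sqrt{2}\sqrt[3]{3}\in K=\Qq(\sqrt{2},\sqrt[3]{3})$, hence an algebraic integer of $K$. I would then check $\gamma\ne0$: the four elements $1,\sqrt{2},\sqrt[3]{3},\sqrt{2}\sqrt[3]{3}$ are part of a $\Qq$-basis of $K$, so $\gamma=0$ forces $n_0=n_1=n_2=n_3=0$, contradicting $n_1^2+n_2^2\ge1$.

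Now I would list the six embeddings of $K$ into $\Cc$: they are determined by $\sqrt{2}\mapsto\varepsilon\sqrt{2}$ ($\varepsilon=\pm1$) and $\sqrt[3]{3}\mapsto\zeta^k\sqrt[3]{3}$ ($k=0,1,2$, $\zeta=\ee^{2\pi\ii/3}$). Under each such embedding, the conjugate $\gamma_{(\varepsilon,k)}$ has modulus at most
\begin{displaymath}
\vert\gamma_{(\varepsilon,k)}\vert\le\vert n_1\vert\sqrt[3]{3}+\vert n_2\vert\sqrt{2}\sqrt[3]{3}+2\vert n_3\vert\sqrt{2}+2\vert n_0\vert\le c_7 N,
\end{displaymath}
for a constant $c_7=c_7(\alpha,\sqrt{2})$, using the bound on $\vert n_0\vert$ from the first step. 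Since $\gamma$ is a nonzero algebraic integer of $K$, the norm $N_{K/\Qq}(\gamma)=\prod_{(\varepsilon,k)}\gamma_{(\varepsilon,k)}$ is a nonzero rational integer, so $\vert N_{K/\Qq}(\gamma)\vert\ge1$. Separating off the distinguished factor $\gamma_{(+1,0)}=\gamma=\pm2\omega$ from the other five conjugates gives
\begin{displaymath}
1\le\vert N_{K/\Qq}(\gamma)\vert=2\omega\cdot\prod_{(\varepsilon,k)\ne(+1,0)}\vert\gamma_{(\varepsilon,k)}\vert\le2\omega\cdot(c_7 N)^5,
\end{displaymath}
which rearranges to $\omega\ge c_5/N^5$ with $c_5=1/(2c_7^5)$.

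The argument is essentially mechanical once the setup is in place; the only genuine point to verify is the $\Qq$-linear independence of $\{1,\sqrt{2},\sqrt[3]{3},\sqrt{2}\sqrt[3]{3}\}$ (which is immediate because these four elements form part of the standard integral basis of $K/\Qq$, given that $[\Qq(\sqrt{2}):\Qq]=2$ and $[\Qq(\sqrt[3]{3}):\Qq]=3$ are coprime so $[K:\Qq]=6$) and the bookkeeping of the absolute constants $c_6$ and $c_7$. No step should present a real obstacle; the exponent $5$ in the denominator arises naturally as $[K:\Qq]-1=5$, which matches the statement of the lemma.
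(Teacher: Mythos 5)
Your proof is correct and follows essentially the same Liouville-style norm argument the paper sketches in the paragraph before the lemma: clear the denominator to form the nonzero algebraic integer $\gamma=2(n_1\alpha+n_2\sqrt{2}\alpha-n_3\sqrt{2}-n_0)\in K=\Qq(\sqrt{2},\sqrt[3]{3})$, use $\vert N_{K/\Qq}(\gamma)\vert\ge1$, and bound each of the five remaining conjugates by $O(N)$. The only step the paper leaves implicit --- the uniform bound $\vert\gamma_{(\varepsilon,k)}\vert\le c_7N$ for all six embeddings, which supplies the exponent $5=[K:\Qq]-1$ --- you fill in correctly; the one tiny imprecision is the parenthetical claim that $\{1,\sqrt{2},\sqrt[3]{3},\sqrt{2}\sqrt[3]{3}\}$ lie in ``the standard integral basis'' of $\OOO_K$ (whether the power basis is an integral basis is beside the point), but what you actually use is only $\Qq$-linear independence, which your degree argument does establish.
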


We develop here a rather straightforward adaptation of the method in Section~\ref{sec2}.
Since Lemma~\ref{lem31} gives a much weaker bound than Lemma~\ref{lem21}, we cannot expect to be able to establish superdensity here.
Nevertheless, we can still establish \textit{polynomial} time-quantitative density.

\begin{thm}\label{thm2}
Let $\alpha=\root{3}\of{3}/2$.
Consider a half-infinite geodesic $\LLL(t)$, $t\ge0$, of slope $\alpha$ and with the usual arc-length parametrization,
on the regular octagon surface as represented by the polyrectangle surface $\PPP$ as shown in Figure~3.3.
Then there are explicitly computable constants $c_6=c_6(\alpha)>1$ and $c_7=c_7(\alpha)>1$ such that,
for any integer $n\ge2$ and any aligned square $A$ of side length $1/n$ on~$\PPP$, there exists a real number $t_0$ such that
\begin{displaymath}
0\le t_0\le c_6n^{c_7}
\quad\mbox{and}\quad
\LLL(t_0)\in A.
\end{displaymath}
\end{thm}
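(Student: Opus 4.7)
The plan is to follow the template of Section~\ref{sec2}, substituting Lemma~\ref{lem31} for Lemma~\ref{lem21} throughout the chain of estimates. The key structural fact that makes the substitution possible is identity~\eqref{eq3.2}: after any sequence of $n^*$ consecutive shifts of a point $w_{i'}(y)$ under the $\alpha$-flow, the vertical coordinate $y^*$ of the image satisfies
\begin{displaymath}
y^*-y = n_1\alpha + n_2\sqrt{2}\,\alpha - n_3\sqrt{2} - n_4,
\end{displaymath}
with integers $n_1,n_2,n_3\in[0,n^*]$. Thus the distance of the vertical drift from the nearest integer is precisely the Diophantine quantity bounded by Lemma~\ref{lem31}, giving a lower bound of order $1/(n^*)^5$ in place of the linear $1/((A+2)n)$ bound used in Section~\ref{sec2}.

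I would first establish a polyrectangle analog of Lemma~\ref{lem22}: for an open vertical segment $QR$ of length $x$, there is a $2$-direction visiting time $t^*$ with $0<|t^*|\le c_8/x^B$ and $\LLL(t^*)\in QR$, for explicit constants $c_8$ and $B$ depending on $\alpha$ and $\sqrt{2}$. The proof copies the four-claim framework of Lemma~\ref{lem22} almost verbatim: follow $\alpha$-shifts of $QR$ until a vertex splits the interval, at each step either keep the top subinterval when its length exceeds a constant fraction $c_1'$ of its predecessor or switch to the reverse $\alpha$-flow, and iterate. In the analog of Claim~1, after $M$ consecutive $\alpha$-shifts the pigeonhole principle on a fixed vertical edge produces $K\gtrsim Mx$ shifted copies of $QR$ through a common point, and a standard averaging yields an index $\nu$ for which the consecutive vertical gap is at most $3x/K$ and the consecutive shift gap is at most $3M/K$. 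Applying Lemma~\ref{lem31} with $N\le 3M/K$ gives
\begin{displaymath}
\frac{3x}{K}\;\ge\; u_{\nu+1}-u_\nu \;\ge\; \frac{c_5}{(3M/K)^5},
\end{displaymath}
which fails once $M$ exceeds a constant times $x^{-5}$, forcing a split within that many shifts. The outer iteration terminates in at most $8$ rounds since $\PPP$ has exactly $7$ vertical edges, and the reverse-flow phase mirrors Claim~3 and the $y_{r+i}$ estimates line by line, producing polynomial rather than linear bounds.

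The remaining lemmas adapt directly. The analog of Lemma~\ref{lem23} follows from Lemma~\ref{lem31}: the vertical offset of $\LLL(t^*)$ from either endpoint of $QR$ is at least $c_5/N^5$ with $N\le c_8/x^B$, giving a lower bound of the form $c_9 x^{5B}$. The analogs of Lemmas~\ref{lem24} and~\ref{lem25} then follow by repeated application of the visiting lemma to the current largest gap: each application costs time polynomial in $1/x$ and shrinks that gap by at least $c_9 x^{5B}$, so iterating a polynomial number of times halves the maximum gap at total cost polynomial in $1/x$. Finally the doubling argument at the end of the proof of Theorem~\ref{thm1} carries over verbatim: to force the maximum gap on a fixed vertical edge down from $1$ to $1/n$ costs
\begin{displaymath}
\sum_{k=0}^{\lceil\log_2 n\rceil} O\bigl((2^k)^{B'}\bigr) = O(n^{B'})
\end{displaymath}
time for a suitable exponent $B'$, giving the constant $c_7$. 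Discrete density of intersections on the vertical edge at spacing $1/n$ transfers, by a short continuity argument and the geometry of aligned squares of side $1/n$, to $(c/n)$-density of the geodesic throughout $\PPP$.

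The principal obstacle is the combinatorial bookkeeping of how a block of $m$ consecutive $\alpha$-shifts decomposes into the integer triple $(n_1,n_2,n_3)$ required by Lemma~\ref{lem31}. In the polysquare case the number of shifts was itself the only integer coefficient appearing in the Diophantine inequality; here there are two distinct vertical drift magnitudes $\alpha$ and $\sqrt{2}\alpha$ depending on which horizontal street the shift begins in, plus a separate count $n_3$ of transitions between the two streets. Identity~\eqref{eq3.2} together with the uniform bound $|n_i|\le n^*$ is exactly what makes the pigeonhole/Diophantine argument go through. Once this reduction is installed at the top of each Claim~$1$-style argument, the polynomial-exponent analog of the Section~\ref{sec2} pipeline runs and delivers Theorem~\ref{thm2} with explicit, computable constants $c_6$ and $c_7$.
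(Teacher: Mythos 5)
Your overall plan is right — substitute the polynomial Diophantine bound of Lemma~\ref{lem31} for the linear bound of Lemma~\ref{lem21}, feed it through the identity~\eqref{eq3.2} to reduce vertical drift to the algebraic quantity $\Vert n_1\alpha+n_2\sqrt{2}\,\alpha-n_3\sqrt{2}\Vert$, and run the four-claim machine of Lemma~\ref{lem22} on the seven vertical edges of the octagon polyrectangle. Your Claim~1 analog (the pigeonhole/averaging computation forcing a split within $O(x^{-5})$ shifts), your remark that the two distinct drift magnitudes $\alpha$ and $\sqrt{2}\alpha$ together with the street-transition count $n_3$ are handled uniformly by~\eqref{eq3.2}, and your transfer of the maximum-gap halving and doubling arguments are all correct and match the paper.

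There is, however, a genuine gap at the switch-over criterion. You propose to ``keep the top subinterval when its length exceeds a constant fraction $c_1'$ of its predecessor,'' importing the constant-fraction threshold from Section~\ref{sec2} verbatim. This cannot work here, and the reason is precisely the polynomial weakening of the Diophantine bound. When you enter the reverse-flow phase at step $r$, the lower bound on the new length $y_{r+1}$ takes the shape
\begin{displaymath}
y_{r+1}\ge\Vert n_1\alpha+n_2\sqrt{2}\,\alpha-n_3\sqrt{2}\Vert-x_{r+1}\ge\frac{c_5}{(k_r+k_{r+1})^5}-x_{r+1},
\end{displaymath}
and since $k_r,k_{r+1}=O(x_r^{-5})$, the Diophantine term is only of order $x_r^{25}$. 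Under a constant threshold you would only know $x_{r+1}<c_1'x_r$, which for small $x_r$ dwarfs $x_r^{25}$; the right-hand side then gives no positive lower bound on $y_{r+1}$, and the reverse-flow iteration never gets started. In Section~\ref{sec2} the analogous Diophantine term was linear in $x_r$, which is exactly why a constant fraction sufficed there. The paper's fix is to make the threshold itself polynomial: the failure criterion triggering reverse flow is $x_{r+1}<c_{10}x_r^{5^{16}}$ with $c_{10}=(c_5/60)^{6^{16}}$, where the colossal exponent $5^{16}=(5^2)^8$ is dictated by iterating the degree-$25$ loss through up to eight reverse-flow steps (the edge-repetition bound of~\eqref{eq3.32}), each step compounding the exponent by another factor of $25$ as recorded in~\eqref{eq3.28}--\eqref{eq3.30}. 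Once the threshold is replaced by this polynomial criterion and the $y_{r+i}$ bounds are tracked with their growing exponents, the rest of your outline goes through and delivers Theorem~\ref{thm2}.
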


Consider the geodesic $\LLL(t)=\LLL_\alpha(t)$ with slope $\alpha$ and starting point $\LLL(0)=R$,
where $R$ lies on a vertical edge $w_{i_0}$ of the polyrectangle surface~$\PPP$.
Assume that $\LLL(t)$ has arc-length parametrization, and that it does not hit a vertex of $\PPP$
over a sufficiently long neighborhood $-T\le t\le T$ of~$0$.
Then $R=w_{i_0}(y)$ for some $y$ satisfying $0<y<1$ if $i_0=1,2,3,4$ and satisfying $0<y<\sqrt{2}$ if $i_0=5,6,7$.
Let $Q=w_{i_0}(z)$ be a point where $0<z<y$.
We study the following question.
What can we say about the time $t$ with $\LLL(t)\in QR$ such that $\vert t\vert$ is minimum?
Here $QR$ denotes the open interval with endpoints $Q$ and~$R$.
In other words, how long does it take for the geodesic, starting at the point~$R$, to visit the open interval $QR$ of length $x=y-z$,
if the geodesic can go both forward and backward?

Corresponding to Lemma~\ref{lem22}, we have the following intermediate result.

\begin{lem}\label{lem32}
Let $\alpha=\root{3}\of{3}/2$.
Let $QR$ be an open vertical segment, with top endpoint $R$ and length $0<x<1/2$,
on a vertical edge $w_{i_0}$ of the polyrectangle surface $\PPP$ as shown in Figure~3.3.
Consider a geodesic $\LLL(t)$ with slope $\alpha$ and starting point $\LLL(0)=R$.
There exists explicit constants $c_8=c_8(\alpha)>1$ and $c_9=c_9(\alpha)>1$
such that there is a $2$-direction visiting time $t^*$ satisfying
\begin{displaymath}
0<\vert t^*\vert\le\frac{c_8}{x^{c_9}}
\quad\mbox{and}\quad
\LLL(t^*)\in QR.
\end{displaymath}
\end{lem}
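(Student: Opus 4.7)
The plan is to follow the architecture of the proof of Lemma~\ref{lem22} step by step, with the Diophantine input of Lemma~\ref{lem21} replaced by Lemma~\ref{lem31} and every linear bound in $1/x$ upgraded to a polynomial one. The key unifying observation is that, thanks to~\eqref{eq3.2}, whenever two shift-images of a vertical subinterval meet on the same edge after a total of $n^*$ consecutive shifts, the offset between them, reduced modulo~$1$ or~$\sqrt{2}$, has the form $\|n_1\alpha+n_2\sqrt{2}\alpha-n_3\sqrt{2}\|$ with $\max\{|n_1|,|n_2|,|n_3|\}\le n^*$, so Lemma~\ref{lem31} yields the lower bound $c_5/(n^*)^5$ at exactly the places where Lemma~\ref{lem21} was previously used. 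Throughout, let $s=7$ denote the number of vertical edges in Figure~3.3.

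First I would establish the analog of Claim~1: if the first $N^*=[C_{10}/x^5]+1$ consecutive shifts of $QR$ under the $\alpha$-flow produce no split, then there is a $2$-direction visiting time of size at most $\sqrt{2}\,C_{10}/x^5$. The pigeonhole step and the dichotomy on the sorted heights $u_\nu$ are unchanged; only the final contradiction differs. Writing $M$ for the pigeonhole multiplicity of order $N^*x/s$, the averaging argument yields an index~$\nu$ with $u_{\nu+1}-u_\nu\le 3x/M$ and $j_{\nu+1}-j_\nu\le 3N^*/M$, while Lemma~\ref{lem31} gives $u_{\nu+1}-u_\nu>c_5M^5/(3N^*)^5$. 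Balancing these inequalities with $M$ of order $N^*x/s$ shows that taking $C_{10}$ of order $s^6/c_5$ forces the required contradiction.

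Next I would carry out the forward iterative construction exactly as in Section~\ref{sec2}, producing top intervals $Q^{(i)}R^{(i)}$ of lengths $x_i$ and applying the dichotomy $x_{i+1}\ge c_1x_i$ versus $x_{i+1}<c_1x_i$; edge repetition within $s+1$ steps closes the argument in the first case via the analog of Claim~2, now with shift counts $k_i$ of order $1/x_i^5$. The \emph{main obstacle} appears in the second case: after switching to the reverse $\alpha$-flow from $Q^{(r)}R^{(r)}$, the analog of~\eqref{eq2.21} gives $y_{r+1}\ge\|n_1\alpha+n_2\sqrt{2}\alpha-n_3\sqrt{2}\|-x_{r+1}$, but Lemma~\ref{lem31} with $|n_i|$ of order $1/x_r^5$ delivers an offset only of order $x_r^{25}$, whereas the polysquare argument exploited an offset of order $x_r$. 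Consequently $c_1$ cannot be taken as an absolute constant; I would instead set $c_1=c_1(x_r)$ of order $x_r^{24}$ with a small absolute constant factor, so that $x_{r+1}<c_1(x_r)\,x_r$ implies $x_{r+1}<\frac{1}{2}\cdot(\text{offset})$ and therefore $y_{r+1}\ge\delta_1(\alpha,\sqrt{2})\,x_r^{25}$ for a positive $\delta_1$.

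Once this $x_r$-dependent threshold is installed, the reverse-flow iteration of Claims~3 and~4 runs along the same lines, producing intervals $Q_{r+i}R_{r+i}$ of lengths $y_{r+i}\ge\delta_i\,x_r^{K_i}$, with $K_i$ growing polynomially in~$i$ but capped because edge repetition terminates the process within $s+1$ steps. Summing the shift counts $k_{r+i}$ of order $1/y_{r+i-1}^5$ across at most $2(s+1)$ phases and multiplying by $\sqrt{1+\alpha^2}\le\sqrt{2}$ to convert shift counts into arc length gives a total visiting time of the form $c_8/x^{c_9}$, with explicit $c_8,c_9>1$ depending only on $\alpha$, $\sqrt{2}$ and $s$. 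The bookkeeping required to choose $C_{10}$, the threshold $c_1(x_r)$, and the constants $\delta_i,K_i$ consistently with the analogs of~\eqref{eq2.15}, \eqref{eq2.37} and~\eqref{eq2.41} is routine; the rest is a faithful transcription of the proof of Lemma~\ref{lem22}.
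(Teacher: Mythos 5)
Your overall architecture is correct — follow Lemma~\ref{lem22} step by step, replace the use of Lemma~\ref{lem21} by Lemma~\ref{lem31} via the crucial identity~\eqref{eq3.2}, and upgrade each $1/x$ bound to a power of $1/x$. You also correctly identify the main obstacle: the Diophantine input only yields an offset of order $x_r^{25}$ after the first reverse-flow step, so the threshold in the dichotomy cannot be an absolute multiple of $x_r$.

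However, there is a genuine gap in how you resolve it. You set the splitting threshold to $c_1(x_r)\,x_r$ of order $x_r^{25}$, which is tuned precisely so that $x_{r+1}$ is small compared with the first offset. But at \emph{every} subsequent reverse-flow step, the comparison is still against the \emph{same} fixed quantity $x_{r+1}$ (the paper's estimates for $y_{r+i}$ always subtract $x_{r+1}$, not $y_{r+i-1}$, because the chain of reverse shifts is always anchored at $R^{(r+1)}=w_{j_{r+1}}(x_{r+1})$). Meanwhile the shift counts $k_r+\cdots+k_{r+i}$ are of order $y_{r+i-1}^{-5}$, so the Diophantine offset at step~$i$ shrinks like $x_r^{5^{2i}}$ — doubly exponential in $i$, not polynomial as you write. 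To keep the subtraction $y_{r+i}\ge\text{offset}_i-x_{r+1}$ from destroying the bound at step~$i$, you need $x_{r+1}$ to be smaller than $\text{offset}_i$ for \emph{all} $i$ up to $s+1=8$, i.e.\ a threshold of order $x_r^{5^{16}}$, not $x_r^{25}$. Your threshold is adequate only for $i=1$; for $i\ge2$ the argument fails. This is precisely why the paper takes the threshold to be $c_{10}\,x_r^{5^{16}}$ with $c_{10}=(c_5/60)^{6^{16}}$ in~\eqref{eq3.8}--\eqref{eq3.9}, and verifies the compatibility condition~\eqref{eq3.30} for $i\le8$. Your statement that $K_i$ ``grows polynomially in $i$ but is capped'' underestimates the growth and misses that the \emph{threshold} itself, not merely the final exponent, must absorb the worst case $i=s+1$.
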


\begin{proof}
Let $Q=w_{i_0}(z)$ and $R=w_{i_0}(y)$, where $0<z<y<1$ if $i_0=1,2,3,4$, and where $0<z<y<\sqrt{2}$ if $i_0=5,6,7$.

If the shift of the open interval $QR$ under the $\alpha$-flow does not split, then there exists an integer $i_1$
satisfying $1\le i_1\le7$ such that $QR$ is shifted to an open interval $Q_1R_1$ on the vertical edge~$w_{i_1}$.
Let us now repeat the argument with the open interval~$Q_1R_1$.
If the shift of $Q_1R_1$ under the $\alpha$-flow does not split, then there exists an integer $i_2$ satisfying $1\le i_2\le7$
such that $Q_1R_1$ is shifted to an open interval $Q_2R_2$ on the vertical edge~$w_{i_2}$.
We now repeat the argument with the open interval~$Q_2R_2$, and so on, until we get the first split.

\begin{claim1}
Suppose that there is no split among the first $[(30)^6/c_5x^5]$ consecutive shifts of the open interval $QR$ under the $\alpha$-flow,
where $QR$ has length $0<x<1/2$, and $c_5=c_5(\alpha)$ is the constant in Lemma~\ref{lem31}.
Then there is a visiting time $t^*$ such that
\textcolor{white}{xxxxxxxxxxxxxxxxxxxxxxxxxxxxxx}
\begin{displaymath}
0<t^*\le\frac{2(30)^6}{c_5x^5}
\quad\mbox{and}\quad
\LLL(t^*)\in QR,
\end{displaymath}
so that the conclusion of Lemma~\ref{lem32} holds with suitable constants $c_8=c_8(\alpha)$ and $c_9=c_9(\alpha)$.
\end{claim1}

\begin{proof}[Justification of Claim~1]
The open intervals $Q_jR_j$, where $1\le j\le(30)^6/c_5x^5$, all have length~$x$.
Their total length is therefore at least
\begin{equation}\label{eq3.3}
\frac{(30)^6}{c_5x^4}-1\ge(4+3\sqrt{2})L,
\quad\mbox{where}\quad
L=\left[\frac{(30)^6}{9c_5x^4}\right].
\end{equation}
Note that the total length of the vertical edges of $\PPP$ is $4+3\sqrt{2}<9$.
It follows easily from the Pigeonhole Principle that there exists a point $P$ on a vertical edge
which is covered by at least $L$ of these open intervals~$Q_jR_j$.
In other words, there exist integers $j_\nu$, $\nu=1,\ldots,L$, such that
\begin{displaymath}
1\le j_1<j_2<\ldots<j_L\le\frac{(30)^6}{c_5x^5}
\end{displaymath}
such that $P$ is contained in
\textcolor{white}{xxxxxxxxxxxxxxxxxxxxxxxxxxxxxx}
\begin{displaymath}
Q_{j_\nu}R_{j_\nu},
\quad
\nu=1,\ldots,L.
\end{displaymath}
For every $\nu=1,\ldots,L$, the open interval $Q_{j_\nu}R_{j_\nu}$ lies on the vertical edge~$w_{i_{j_\nu}}$.
It follows that the values $i_{j_\nu}$, $\nu=1,\ldots,L$, are all equal to each other.
Suppose that $i^*$ is their common value.
Then for every $\nu=1,\ldots,L$, we can write
\begin{displaymath}
R_{j_\nu}=w_{i^*}(u_\nu),
\end{displaymath}
where $0<u_\nu<1$ or $0<u_\nu<\sqrt{2}$, and define $t_\nu$ by writing $\LLL(t_\nu)=R_{j_\nu}$.

Suppose first that there exist two integers $\nu'$ and $\nu''$ such that
\begin{equation}\label{eq3.4}
1\le\nu'<\nu''\le L
\quad\mbox{and}\quad
u_{\nu'}>u_{\nu''}.
\end{equation}
Since $Q_{j_{\nu'}}R_{j_{\nu'}}$ and $Q_{j_{\nu''}}R_{j_{\nu''}}$ intersect, we clearly have
\begin{displaymath}
R_{j_{\nu''}}=\LLL(t_{\nu''})\in Q_{j_{\nu'}}R_{j_{\nu'}}.
\end{displaymath}
Applying the reverse $\alpha$-flow for time~$t_{\nu'}$ then takes $Q_{j_{\nu'}}R_{j_{\nu'}}$ to~$QR$,
and also takes $R_{j_{\nu''}}=\LLL(t_{\nu''})$ to
$\LLL(t_{\nu''}-t_{\nu'})$, so that $\LLL(t_{\nu''}-t_{\nu'})\in QR$.
Now take $t^*=t_{\nu''}-t_{\nu'}>0$.
Then
\textcolor{white}{xxxxxxxxxxxxxxxxxxxxxxxxxxxxxx}
\begin{displaymath}
0<t^*\le\sqrt{2}\sqrt{1+\alpha^2}(j_{\nu''}-j_{\nu'})\le\frac{2(30)^6}{c_5x^5},
\end{displaymath}
justifying the claim.

Suppose next that there do not exist two integers $\nu'$ and $\nu''$ such that \eqref{eq3.4} holds.
Then we must have
\textcolor{white}{xxxxxxxxxxxxxxxxxxxxxxxxxxxxxx}
\begin{displaymath}
u_1<u_2<\ldots<u_L
\quad\mbox{and}\quad
u_L-u_1\le x,
\end{displaymath}
and a routine average computation argument shows that for at least $2L/3$ of the indices $\nu=1,\ldots,L$, we have
\textcolor{white}{xxxxxxxxxxxxxxxxxxxxxxxxxxxxxx}
\begin{equation}\label{eq3.5}
u_{\nu+1}-u_\nu\le\frac{3x}{L}.
\end{equation}
On the other hand, we also have
\begin{displaymath}
j_1<j_2<\ldots<j_L
\quad\mbox{and}\quad
j_L-j_1\le\frac{(30)^6}{c_5x^5},
\end{displaymath}
and a routine average computation argument shows that for at least $2L/3$ of the indices $\nu=1,\ldots,L$, we have
\textcolor{white}{xxxxxxxxxxxxxxxxxxxxxxxxxxxxxx}
\begin{equation}\label{eq3.6}
j_{\nu+1}-j_\nu\le\frac{30}{x}.
\end{equation}
It follows that there must exist some index $\nu=1,\ldots,L$ such that both \eqref{eq3.5} and \eqref{eq3.6} hold.
For this value of~$\nu$, it follows from \eqref{eq3.2} that there exist integers $n_1,n_2,n_3,n_4$ such that
\begin{equation}\label{eq3.7}
u_{\nu+1}-u_\nu=n_1\alpha+n_2\sqrt{2}\alpha-n_3\sqrt{2}-n_4
\end{equation}
and $0\le n_1,n_2,n_3\le j_{\nu+1}-j_\nu$ and $n_1+n_2\ge1$.
Using Lemma~\ref{lem31} and \eqref{eq3.5}--\eqref{eq3.7}, we deduce that
\begin{displaymath}
\frac{3x}{L}
\ge\Vert u_{\nu+1}-u_\nu\Vert
=\Vert n_1\alpha+n_2\sqrt{2}\alpha-n_3\sqrt{2}\Vert
>\frac{c_5}{(j_{\nu+1}-j_\nu)^5}
\ge\frac{c_5x^5}{(30)^5}.
\end{displaymath}
However, this leads to the inequality
\begin{displaymath}
L<\frac{(30)^6}{10c_5x^4}
\end{displaymath}
which clearly contradicts the definition of $L$ given by \eqref{eq3.3}.

It follows that there exist two integers $\nu'$ and $\nu''$ such that \eqref{eq3.4} holds, and this completes our justification of Claim~1.
\end{proof}

In view of Claim~1, we may assume that there exists an integer $k$ such that
\begin{displaymath}
1\le k\le\frac{(30)^6}{c_5x^5}
\end{displaymath}
and the $k$-th shift under the $\alpha$-flow of the open interval $QR$ of length $x$ splits for the first time.

Suppose that the image of the original open interval $QR$ after the first $k$ shifts under the $\alpha$-flow
now consists of a vertex of~$\PPP$, a top interval $w_{j_1}(0,x_1)$ of length~$x_1$ and a bottom interval of length~$x_1^*$,
where $x_1+x_1^*=x$.
Since the starting point $R$ of the geodesic is the \textit{top endpoint} of the interval $QR=w_{i_0}(z,y)$,
we shall make use of top intervals in our subsequent argument.
We distinguish two cases.
Either
\begin{equation}\label{eq3.8}
x_1\ge c_{10}x^{5^{16}}
\quad\mbox{or}\quad
0<x_1<c_{10}x^{5^{16}},
\end{equation}
where the choice of the constant
\begin{equation}\label{eq3.9}
c_{10}=c_{10}(\alpha)=\left(\frac{c_5(\alpha)}{60}\right)^{6^{16}}
\end{equation}
and the choice of the exponent $5^{16}$ for $x$ will be explained later.

Suppose that the first case in \eqref{eq3.8} holds.
Then we delete the bottom interval, keep the top interval $w_{j_1}(0,x_1)$ and write $Q^{(1)}R^{(1)}=w_{j_1}(0,x_1)$.
It then follows from our construction that the geodesic $\LLL(t)$, $t\ge0$, starting at the point~$R$, contains the point~$R^{(1)}$, so that
\textcolor{white}{xxxxxxxxxxxxxxxxxxxxxxxxxxxxxx}
\begin{displaymath}
R^{(1)}=\LLL(t_1)
\quad\mbox{for some $t_1>0$}.
\end{displaymath}

We now repeat this argument on the open interval $Q^{(1)}R^{(1)}=w_{j_1}(0,x_1)$.

In view of a suitable analog of Claim~1, we may assume that there exists an integer $k_1$ such that
\textcolor{white}{xxxxxxxxxxxxxxxxxxxxxxxxxxxxxx}
\begin{displaymath}
1\le k_1\le\frac{(30)^6}{c_5x_1^5}
\end{displaymath}
and the $k_1$-th shift under the $\alpha$-flow of the open interval $Q^{(1)}R^{(1)}$ of length $x_1$ splits for the first time.

Suppose that the image of the original open interval $Q^{(1)}R^{(1)}$ after the first $k_1$ shifts under the $\alpha$-flow
now consists of a vertex of~$\PPP$, a top interval $w_{j_2}(0,x_2)$ of length~$x_2$ and a bottom interval of length~$x_2^*$,
where $x_2+x_2^*=x_1$.
We distinguish two cases.
Either
\textcolor{white}{xxxxxxxxxxxxxxxxxxxxxxxxxxxxxx}
\begin{equation}\label{eq3.10}
x_2\ge c_{10}x_1^{5^{16}}
\quad\mbox{or}\quad
0<x_2<c_{10}x_1^{5^{16}}.
\end{equation}

Suppose that the first case in \eqref{eq3.10} holds.
Then we delete the bottom interval, keep the top interval $w_{j_2}(0,x_2)$ and write $Q^{(2)}R^{(2)}=w_{j_2}(0,x_2)$.
It then follows from our construction that the geodesic $\LLL(t)$, $t\ge0$, starting at the point~$R$, contains the point~$R^{(2)}$, so that
\textcolor{white}{xxxxxxxxxxxxxxxxxxxxxxxxxxxxxx}
\begin{displaymath}
R^{(2)}=\LLL(t_2)
\quad\mbox{for some $t_2>t_1$}.
\end{displaymath}

We now repeat this argument on the open interval $Q^{(2)}R^{(2)}=w_{j_2}(0,x_2)$.

In view of a suitable analog of Claim~1, we may assume that there exists an integer $k_2$ such that
\textcolor{white}{xxxxxxxxxxxxxxxxxxxxxxxxxxxxxx}
\begin{displaymath}
1\le k_2\le\frac{(30)^6}{c_5x_2^5}
\end{displaymath}
and the $k_2$-th shift under the $\alpha$-flow of the open interval $Q^{(2)}R^{(2)}$ of length $x_2$ splits for the first time.

Suppose that the image of the original open interval $Q^{(2)}R^{(2)}$ after the first $k_2$ shifts under the $\alpha$-flow
now consists of a vertex of~$\PPP$, a top interval $w_{j_3}(0,x_3)$ of length~$x_3$ and a bottom interval of length~$x_3^*$,
where $x_3+x_3^*=x_2$.
We distinguish two cases.
Either
\textcolor{white}{xxxxxxxxxxxxxxxxxxxxxxxxxxxxxx}
\begin{equation}\label{eq3.11}
x_3\ge c_{10}x_2^{5^{16}}
\quad\mbox{or}\quad
0<x_3<c_{10}x_2^{5^{16}}.
\end{equation}

Suppose that the first case in \eqref{eq3.11} holds.
Then we delete the bottom interval, keep the top interval $w_{j_3}(0,x_3)$ and write $Q^{(3)}R^{(3)}=w_{j_3}(0,x_3)$.
It then follows from our construction that the geodesic $\LLL(t)$, $t\ge0$, starting at the point~$R$, contains the point~$R^{(3)}$, so that
\textcolor{white}{xxxxxxxxxxxxxxxxxxxxxxxxxxxxxx}
\begin{displaymath}
R^{(3)}=\LLL(t_3)
\quad\mbox{for some $t_3>t_2$}.
\end{displaymath}

We now repeat this argument on the open interval $Q^{(3)}R^{(3)}=w_{j_3}(0,x_3)$.

And so on, \textit{assuming that at each step, the first case in the corresponding analog
of \eqref{eq3.8}, \eqref{eq3.10} and \eqref{eq3.11} holds}.

This forward shift process under the $\alpha$-flow defines a sequence of top intervals
\begin{equation}\label{eq3.12}
Q^{(i)}R^{(i)}=w_{j_i}(0,x_i),
\quad
i\ge1,
\end{equation}
each of which arises when the $k_{i-1}$-th shift under the $\alpha$-flow of the open interval $Q^{(i-1)}R^{(i-1)}$ of length $x_{i-1}$
splits for the first time, and the integer $k_{i-1}$ satisfies
\begin{equation}\label{eq3.13}
1\le k_{i-1}\le\frac{(30)^6}{c_5x_{i-1}^5}.
\end{equation}
The lengths $x_i$ of these intervals \eqref{eq3.12} satisfy
\begin{equation}\label{eq3.14}
x_i\ge c_{10}x_{i-1}^{5^{16}},
\end{equation}
with the convention that $x_0=x$.
Furthermore, the geodesic $\LLL(t)$, $t\ge0$, starting at the point~$R$, contains the point~$R^{(i)}$, so that
\begin{displaymath}
R^{(i)}=\LLL(t_i)
\quad\mbox{for some $t_i>t_{i-1}$},
\end{displaymath}
where $t_0=0$.

As there are only finitely many vertical edges in the polyrectangle surface~$\PPP$, there will at some point be \textit{edge repetition},
when there exist two integers $i_1$ and $i_2$ satisfying $1\le i_1<i_2$ such that the corresponding top intervals
\begin{displaymath}
Q^{(i_1)}R^{(i_1)}=w_{j_{i_1}}(0,x_{i_1})
\quad\mbox{and}\quad
Q^{(i_2)}R^{(i_2)}=w_{j_{i_2}}(0,x_{i_2})
\end{displaymath}
lying respectively on the vertical edges $w_{j_{i_1}}$ and~$w_{j_{i_2}}$, overlap.
Thus $j_{i_1}=j_{i_2}$.
Now suppose that $j^*$ is their common value.
Then
\begin{equation}\label{eq3.15}
Q^{(i_1)}R^{(i_1)}=w_{j^*}(0,x_{i_1})
\quad\mbox{and}\quad
Q^{(i_2)}R^{(i_2)}=w_{j^*}(0,x_{i_2}).
\end{equation}
Furthermore, since there are precisely $7$ vertical edges on~$\PPP$, it follows that
\begin{equation}\label{eq3.16}
1\le i_1<i_2\le8.
\end{equation}

\begin{claim2}
Suppose that there exist integers $i_1$ and $i_2$ satisfying \eqref{eq3.16} such that the following conditions hold:

(1)
For every integer $i$ satisfying $1\le i\le i_2$, there exists an integer $k_{i-1}$ satisfying \eqref{eq3.13}
such that the top interval $Q^{(i)}R^{(i)}$ given by \eqref{eq3.12} arises when the $k_{i-1}$-th shift
under the $\alpha$-flow of the open interval $Q^{(i-1)}R^{(i-1)}$ splits for the first time, where $Q^{(0)}R^{(0)}=QR$.

(2)
For every integer $i$ satisfying $1\le i\le i_2$, the condition \eqref{eq3.14} holds, where $x_0=x$.

(3)
There exists an integer $j^*$ such that the condition \eqref{eq3.15} holds.

Then there is a visiting time $t^*$ such that $0<t^*\le t_{i_2}$ and $\LLL(t^*)\in QR$, where $R^{(i_2)}=\LLL(t_{i_2})$,
and the conclusion of Lemma~\ref{lem32} holds with suitable constants $c_8=c_8(\alpha)$ and $c_9=c_9(\alpha)$.
\end{claim2}

\begin{proof}[Justification of Claim~2]
Since $i_1<i_2$, we have $x_{i_1}>x_{i_2}$.
It follows from \eqref{eq3.15} that
\begin{displaymath}
R^{(i_2)}=\LLL(t_{i_2})\in Q^{(i_1)}R^{(i_1)}.
\end{displaymath}
Applying the reverse $\alpha$-flow for time $t_{i_1}$ then takes $Q^{(i_1)}R^{(i_1)}$ to~$QR$,
and also takes $R^{(i_2)}=\LLL(t_{i_2})$ to $\LLL(t_{i_2}-t_{i_1})$, so that $\LLL(t_{i_2}-t_{i_1})\in QR$.
This justifies the first assertion in Claim~2.
Next, note that the open interval $Q^{(i_2)}R^{(i_2)}$ arises as a consequence of
\begin{displaymath}
k+k_1+\ldots+k_{i_2-1}
\le\frac{(30)^6}{c_5}\sum_{i=0}^7x_i^{-5}
\end{displaymath}
consecutive shifts under the $\alpha$-flow of the open interval~$QR$, using \eqref{eq3.13}.
Since each shift under the $\alpha$-flow corresponds to a geodesic segment of length $\sqrt{1+\alpha^2}$ or $\sqrt{2}\sqrt{1+\alpha^2}$,
both less than~$2$, it follows that
\begin{equation}\label{eq3.17}
t_{i_2}\le\frac{2(30)^6}{c_5}\sum_{i=0}^7x_i^{-5}.
\end{equation}
Finally, note that the finite sum in \eqref{eq3.17} can be bounded by a polynomial in $x^{-1}$ with non-negative coefficients
depending at most on $c_{10}=c_{10}(\alpha)$, in view of \eqref{eq3.14}.
It is then clear that the conclusion of Lemma~\ref{lem32} holds with suitably chosen constants $c_8=c_8(\alpha)$ and $c_9=c_9(\alpha)$.
\end{proof}

Suppose next that before edge repetition takes place, the condition \eqref{eq3.14} fails.
More precisely, suppose that $r$ satisfying $0\le r\le7$ is the smallest integer $i$ such that $x_{i+1}<c_{10}x_i^{5^{16}}$.
Then
\textcolor{white}{xxxxxxxxxxxxxxxxxxxxxxxxxxxxxx}
\begin{equation}\label{eq3.18}
x_{r+1}<c_{10}x_r^{5^{16}},
\end{equation}
and
\textcolor{white}{xxxxxxxxxxxxxxxxxxxxxxxxxxxxxx}
\begin{displaymath}
x_i\ge c_{10}x_{i-1}^{5^{16}},
\quad
i=1,\ldots,r,
\end{displaymath}
with $x_0=x$.
Furthermore, there exists an integer $k_r$ such that
\begin{equation}\label{eq3.19}
1\le k_r\le\frac{(30)^6}{c_5x_r^5}
\end{equation}
and the $k_r$-th shift under the $\alpha$-flow of the open interval $Q^{(r)}R^{(r)}$ of length $x_r$ splits for the first time,
with the image consisting of a vertex of~$\PPP$, a top interval $w_{j_{r+1}}(0,x_{r+1})$ of length $x_{r+1}$
and a bottom interval of length~$x_{r+1}^*$, where
$x_{r+1}+x_{r+1}^*=x_r$ and \eqref{eq3.18} holds.

We now start with the interval $Q^{(r)}R^{(r)}=w_{j_r}(0,x_r)$ and apply the reverse $\alpha$-flow until it splits for the first time on a vertical edge.
We have the following analog of Claim~1.
The justification is similar to that of Claim~1 in this section or Claim~3 in Section~\ref{sec2}.

\begin{claim3}
Suppose that there is no split among the first $[(30)^6/c_5x_r^5]$ consecutive shifts of the open interval $Q^{(r)}R^{(r)}$ under the reverse
$\alpha$-flow, where $Q^{(r)}R^{(r)}$ has length $x_r>0$, and $c_5=c_5(\alpha)$ is the constant in Lemma~\ref{lem31}.
Let $\LLL_r(t)=\LLL(t+t_r)$ for every $t\le0$.
Then there is a visiting time $t^*$ such that
\begin{displaymath}
0>t^*\ge-\frac{2(30)^6}{c_5x_r^5}
\quad\mbox{and}\quad
\LLL_r(t^*)\in Q^{(r)}R^{(r)},
\end{displaymath}
\textit{i.e.} $\LLL(t^*+t_r)\in Q^{(r)}R^{(r)}$.
Applying the reverse $\alpha$-flow for time $t_r$ then leads to $\LLL(t^*)\in QR$,
so that the conclusion of Lemma~\ref{lem32} holds with suitable constants $c_8=c_8(\alpha)$ and $c_9=c_9(\alpha)$.
\end{claim3}

It is clear that we can assume that the constant $c_5=c_5(\alpha)$ in Lemma~\ref{lem31} satisfies
\begin{displaymath}
0<c_5<1.
\end{displaymath}
We thus make this assumption for the rest of our discussion here.

In view of Claim~3, we may assume that there exists an integer $k_{r+1}$ such that
\begin{equation}\label{eq3.20}
1\le k_{r+1}\le\frac{(30)^6}{c_5x_r^5}
\end{equation}
and the $k_{r+1}$-th shift under the reverse $\alpha$-flow of the open interval $Q^{(r)}R^{(r)}$ of length $x_r$ splits for the first time.

Suppose that the image of the open interval $Q^{(r)}R^{(r)}$ after the first $k_{r+1}$ shifts under the reverse $\alpha$-flow
now consists of a vertex of~$\PPP$, a top interval $w_{j'_{r+1}}(0,y_{r+1})$ of length $y_{r+1}$ and a bottom interval of length $y_{r+1}^*$,
where $y_{r+1}+y_{r+1}^*=x_r$.
Then we delete the bottom interval, keep the top interval and write $Q_{r+1}R_{r+1}=w_{j'_{r+1}}(0,y_{r+1})$.
It then follows from our construction that the geodesic $\LLL(t)$ starting at the point $R$ contains the point~$R_{r+1}$, so that
\begin{displaymath}
R_{r+1}=\LLL(t_{r+1})
\quad\mbox{for some $t_{r+1}$},
\end{displaymath}
where $t_{r+1}$ can be positive or negative.

To estimate $y_{r+1}$ from below, note that the point $R_{r+1}=w_{j'_{r+1}}(y_{r+1})$ is obtained from the point
$R^{(r+1)}=w_{j_{r+1}}(x_{r+1})$ by $k_r$ shifts under the reverse $\alpha$-flow to the point $R^{(r)}$
followed by another $k_{r+1}$ shifts under the reverse $\alpha$-flow from the point~$R^{(r)}$.
Using \eqref{eq3.19} and \eqref{eq3.20}, we see that
\begin{equation}\label{eq3.21}
k_r+k_{r+1}\le\frac{2(30)^6}{c_5x_r^5}\le\frac{2^6(30)^6}{c_5^6x_r^5}=\left(\frac{60}{c_5}\right)^6x_r^{-5}.
\end{equation}
Meanwhile, it follows from \eqref{eq3.2} that there exist integers $n_1,n_2,n_3,n_4$ such that
\begin{displaymath}
y_{r+1}-x_{r+1}=n_1\alpha+n_2\sqrt{2}\alpha-n_3\sqrt{2}-n_4,
\end{displaymath}
with $\vert n_1\vert,\vert n_2\vert,\vert n_3\vert\le k_r+k_{r+1}$ and $n_1^2+n_2^2\ge1$, so that
\begin{displaymath}
y_{r+1}\ge\Vert n_1\alpha+n_2\sqrt{2}\alpha-n_3\sqrt{2}\Vert-x_{r+1}.
\end{displaymath}
It then follows from Lemma~\ref{lem31} and \eqref{eq3.21} that
\begin{align}\label{eq3.22}
y_{r+1}
&
\ge\frac{c_5}{(k_r+k_{r+1})^5}-x_{r+1}
\ge\frac{c_5^{31}}{2^{30}(30)^{30}}x_r^{25}-x_{r+1}
\nonumber
\\
&
\ge\frac{c_5^{36}}{2^{36}(30)^{36}}x_r^{25}
=\left(\frac{c_5}{60}\right)^{36}x_r^{25},
\end{align}
provided that
\textcolor{white}{xxxxxxxxxxxxxxxxxxxxxxxxxxxxxx}
\begin{displaymath}
x_{r+1}<\frac{c_5^{36}}{2^{36}(30)^{36}}x_r^{25}=\left(\frac{c_5}{60}\right)^{36}x_r^{25},
\end{displaymath}
a condition that is clearly satisfied, in view of \eqref{eq3.9} and \eqref{eq3.18}.

We now repeat this argument on the open interval $Q_{r+1}R_{r+1}=w_{j'_{r+1}}(0,y_{r+1})$.

In view of a suitable analog of Claim~3, we may assume that there exists an integer $k_{r+2}$ such that
\textcolor{white}{xxxxxxxxxxxxxxxxxxxxxxxxxxxxxx}
\begin{equation}\label{eq3.23}
1\le k_{r+2}\le\frac{(30)^6}{c_5y_{r+1}^5}
\end{equation}
and the $k_{r+2}$-th shift under the reverse $\alpha$-flow of the open interval $Q_{r+1}R_{r+1}$ of length $y_{r+1}$ splits for the first time.

Suppose that the image of the open interval $Q_{r+1}R_{r+1}$ after the first $k_{r+2}$ shifts
under the reverse $\alpha$-flow now consists of a vertex of~$\PPP$,
a top interval $w_{j'_{r+2}}(0,y_{r+2})$ of length $y_{r+2}$ and a bottom interval of length $y_{r+2}^*$, where $y_{r+2}+y_{r+2}^*=y_{r+1}$.
Then we delete the bottom interval, keep the top interval and write $Q_{r+2}R_{r+2}=w_{j'_{r+2}}(0,y_{r+2})$.
It then follows from our construction that the geodesic $\LLL(t)$ starting at the point $R$ contains the point~$R_{r+2}$, so that
\begin{displaymath}
R_{r+2}=\LLL(t_{r+2})
\quad\mbox{for some $t_{r+2}$},
\end{displaymath}
where $t_{r+2}$ can be positive or negative.

To estimate $y_{r+2}$ from below, note that the point $R_{r+2}=w_{j'_{r+2}}(y_{r+2})$ is obtained from the point
$R^{(r+1)}=w_{j_{r+1}}(x_{r+1})$ by $k_r$ shifts under the reverse $\alpha$-flow to the point $R^{(r)}$
followed by another $k_{r+1}+k_{r+2}$ shifts under the reverse $\alpha$-flow from the point~$R^{(r)}$.
Using \eqref{eq3.21}--\eqref{eq3.23}, we see that
\begin{align}\label{eq3.24}
k_r+k_{r+1}+k_{r+2}
&
\le\frac{2^6(30)^6}{c_5^6x_r^5}+\frac{(30)^6}{c_5y_{r+1}^5}
\le\frac{2^6(30)^6}{c_5^6x_r^5}+\frac{2^{180}(30)^{186}}{c_5^{181}x_r^{125}}
\nonumber
\\
&
\le\frac{2^{216}(30)^{216}}{c_5^{216}x_r^{125}}
=\left(\frac{60}{c_5}\right)^{216}x_r^{-125}.
\end{align}
Meanwhile, it follows from \eqref{eq3.2} that there exist integers $n_1,n_2,n_3,n_4$ such that
\begin{displaymath}
y_{r+2}-x_{r+1}=n_1\alpha+n_2\sqrt{2}\alpha-n_3\sqrt{2}-n_4,
\end{displaymath}
with $\vert n_1\vert,\vert n_2\vert,\vert n_3\vert\le k_r+k_{r+1}+k_{r+2}$ and $n_1^2+n_2^2\ge1$, so that
\begin{displaymath}
y_{r+2}\ge\Vert n_1\alpha+n_2\sqrt{2}\alpha-n_3\sqrt{2}\Vert-x_{r+1}.
\end{displaymath}
It then follows from Lemma~\ref{lem31} and \eqref{eq3.24} that
\begin{align}\label{eq3.25}
y_{r+2}
&
\ge\frac{c_5}{(k_r+k_{r+1}+k_{r+2})^5}-x_{r+1}
\ge\frac{c_5^{1081}}{2^{1080}(30)^{1080}}x_r^{625}-x_{r+1}
\nonumber
\\
&
\ge\frac{c_5^{1296}}{2^{1296}(30)^{1296}}x_r^{625}
=\left(\frac{c_5}{60}\right)^{1296}x_r^{625},
\end{align}
provided that
\textcolor{white}{xxxxxxxxxxxxxxxxxxxxxxxxxxxxxx}
\begin{displaymath}
x_{r+1}<\frac{c_5^{1296}}{2^{1296}(30)^{1296}}x_r^{625}=\left(\frac{c_5}{60}\right)^{1296}x_r^{625},
\end{displaymath}
a condition that is clearly satisfied, in view of \eqref{eq3.9} and \eqref{eq3.18}.

We now repeat this argument on the open interval $Q_{r+2}R_{r+2}=w_{j'_{r+2}}(0,y_{r+2})$.

And so on.

This shift process under the reverse $\alpha$-flow defines a sequence of top intervals
\begin{equation}\label{eq3.26}
Q_{r+i}R_{r+i}=w_{j'_{r+i}}(0,y_{r+i}),
\quad
i\ge1.
\end{equation}
Each interval in \eqref{eq3.26} arises when the $k_{r+i}$-th shift under the reverse $\alpha$-flow
of the open interval $Q_{r+i-1}R_{r+i-1}$ of length $y_{r+i-1}$ splits for the first time, and the integer $k_{r+i}$ satisfies
\textcolor{white}{xxxxxxxxxxxxxxxxxxxxxxxxxxxxxx}
\begin{equation}\label{eq3.27}
1\le k_{r+i}\le\frac{(30)^6}{c_5y_{r+i-1}^5},
\end{equation}
with the convention that $y_r=x_r$.
It is not difficult to prove by induction on $i$ that
\begin{equation}\label{eq3.28}
k_r+k_{r+1}+\ldots+k_{r+i}\le\left(\frac{60}{c_5}\right)^{6^{2i-1}}x_r^{-5^{2i-1}},
\end{equation}
and that
\textcolor{white}{xxxxxxxxxxxxxxxxxxxxxxxxxxxxxx}
\begin{equation}\label{eq3.29}
y_{r+i}\ge\left(\frac{c_5}{60}\right)^{6^{2i}}x_r^{5^{2i}},
\end{equation}
provided that
\textcolor{white}{xxxxxxxxxxxxxxxxxxxxxxxxxxxxxx}
\begin{equation}\label{eq3.30}
x_{r+1}<\left(\frac{c_5}{60}\right)^{6^{2i}}x_r^{5^{2i}},
\end{equation}
a condition that is clearly satisfied, in view of \eqref{eq3.9} and \eqref{eq3.18}, if $i\le8$.
Note that \eqref{eq3.21} and \eqref{eq3.24} represent the inequality \eqref{eq3.28} when $i=1,2$ respectively,
while \eqref{eq3.22} and \eqref{eq3.25} represent the inequality \eqref{eq3.29} when $i=1,2$ respectively.

Furthermore, the geodesic $\LLL(t)$ starting at the point $R$ contains the point~$R_{r+i}$, so that
\textcolor{white}{xxxxxxxxxxxxxxxxxxxxxxxxxxxxxx}
\begin{displaymath}
R_{r+i}=\LLL(t_{r+i})
\quad\mbox{for some $t_{r+i}$},
\end{displaymath}
where $t_{r+i}$ may be positive or negative.

As there are only finitely many vertical edges in the polyrectangle surface~$\PPP$, there will at some point be \textit{edge repetition},
when there exist two integers $i_1$ and $i_2$ satisfying $1\le i_i<i_2$ such that the corresponding top intervals
\begin{displaymath}
Q_{r+i_1}R_{r+i_1}=w_{j'_{r+i_1}}(0,y_{r+i_1})
\quad\mbox{and}\quad
Q_{r+i_2}R_{r+i_2}=w_{j'_{r+i_2}}(0,y_{r+i_2})
\end{displaymath}
lying respectively on the vertical edges $w_{j'_{r+i_1}}$ and~$w_{j'_{r+i_2}}$, overlap.
Thus $j'_{r+i_1}=j'_{r+i_2}$.
Now suppose that $j^*$ is their common value.
Then
\begin{equation}\label{eq3.31}
Q_{r+i_1}R_{r+i_1}=w_{j^*}(0,y_{r+i_1})
\quad\mbox{and}\quad
Q_{r+i_2}R_{r+i_2}=w_{j^*}(0,y_{r+i_2})
\end{equation}
Furthermore, since there are precisely $7$ vertical edges on~$\PPP$, it follows that
\begin{equation}\label{eq3.32}
1\le i_1<i_2\le8.
\end{equation}
This and \eqref{eq3.30} explain our choice of the constant $c_{10}=c_{10}(\alpha)$ given by \eqref{eq3.9},
as well as our choice of the exponent $5^{16}$ in \eqref{eq3.8} and its analogs.

\begin{claim4}
Suppose that there exist integers $i_1$ and $i_2$ satisfying \eqref{eq3.32} such that the following conditions hold:

(1)
For every integer $i$ satisfying $1\le i\le i_2$, there exists an integer $k_{r+i}$ satisfying \eqref{eq3.27}
such that the top interval $Q_{r+i}R_{r+i}$ given by \eqref{eq3.26} arises when the $k_{r+i}$-th shift
under the reverse $\alpha$-flow of the open interval $Q_{r+i-1}R_{r+i-1}$ splits for the first time, where $Q_rR_r=Q^{(r)}R^{(r)}$.

(2)
For every integer $i$ satisfying $1\le i\le i_2$, the condition \eqref{eq3.29} holds.

(3)
There exists an integer $j^*$ such that the condition \eqref{eq3.31} holds.

Then there is a visiting time $t^*$ such that $0<\vert t^*\vert\le\vert t_{r+i_2}-t_{r+i_1}\vert$ and $\LLL(t^*)\in QR$, where
$R_{r+i_1}=\LLL(t_{r+i_1})$ and $R_{r+i_2}=\LLL(t_{r+i_2})$, so the conclusion of Lemma~\ref{lem32} holds
with suitable constants $c_8=c_8(\alpha)$ and $c_9=c_9(\alpha)$.
\end{claim4}

The justification is similar to that of Claim~2 in this section or Claim~4 in Section~\ref{sec2}.

Lemma~\ref{lem32} now follows if we choose the constants $c_8=c_8(\alpha)$ and $c_9=c_9(\alpha)$ appropriately.
\end{proof}

We have the following simple corollary of Lemma~\ref{lem32}.
The proof uses Lemma~\ref{lem31}.

\begin{lem}\label{lem33}
Under the hypotheses of Lemma~\ref{lem32}, the distance between the point $\LLL(t_0)$ and either endpoint $Q$ or $R$ is at least
\begin{displaymath}
c_{11}x^{c_{12}},
\end{displaymath}
where the positive constants $c_{11}=c_{11}(\alpha)$ and $c_{12}=c_{12}(\alpha)$ depend at most on the constant~$c_5=c_5(\alpha)$.
\end{lem}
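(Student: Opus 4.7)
The plan is to follow the pattern of Lemma~\ref{lem23}, with the weaker Diophantine input of Lemma~\ref{lem31} replacing Lemma~\ref{lem21} and the polynomial visiting-time bound of Lemma~\ref{lem32} replacing Lemma~\ref{lem22}. Write $\LLL(t_0)=w_{i_0}(y')$ with $z<y'<y$, so the two distances of interest are $y-y'$ and $y'-z$. For the distance from~$R$, the identity~\eqref{eq3.2}, applied to the $n^{*}$ shifts of the $\alpha$-flow that carry $R$ to $\LLL(t_0)$, produces integers $n_1,n_2,n_3,n_4$ with $n_1^2+n_2^2\ge1$ and $\max\{\vert n_1\vert,\vert n_2\vert,\vert n_3\vert\}\le n^{*}$ such that
\[
y-y'=n_1\alpha+n_2\sqrt{2}\alpha-n_3\sqrt{2}-n_4.
\]
Since $0<y-y'<x<1/2$, the integer $n_4$ is the nearest integer to $n_1\alpha+n_2\sqrt{2}\alpha-n_3\sqrt{2}$, so $y-y'=\Vert n_1\alpha+n_2\sqrt{2}\alpha-n_3\sqrt{2}\Vert\ge c_5/(n^{*})^5$ by Lemma~\ref{lem31}.

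Next I would bound $n^{*}$ by observing that each shift under the $\alpha$-flow corresponds to a geodesic segment of arc-length at least $\sqrt{1+\alpha^2}>1$, whence $n^{*}\le\vert t_0\vert\le c_8/x^{c_9}$ via Lemma~\ref{lem32}. Putting this together yields $y-y'\ge(c_5/c_8^5)\,x^{5c_9}$, establishing the claimed bound for the distance from~$R$ with $c_{11}$ and $c_{12}$ determined by $c_5,c_8,c_9$. For the distance from~$Q$, I would invoke a symmetry argument precisely as in Lemma~\ref{lem23}: reversing the orientation of the interval $QR$ so that $Q$ plays the role of the top endpoint, rerunning the construction of Lemma~\ref{lem32}, and applying Lemma~\ref{lem31} to the resulting Diophantine expression for $y'-z$, one obtains $y'-z\ge c_{11}x^{c_{12}}$ by an identical argument.

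The main obstacle is making the symmetry precise, since $y'-z=x-(y-y')$ is not itself a quantity of the form $\Vert n_1\alpha+n_2\sqrt{2}\alpha-n_3\sqrt{2}\Vert$ (indeed $x$ is arbitrary and typically lies outside the $\Zz$-module generated by $1$, $\alpha$, $\sqrt{2}\alpha$, $\sqrt{2}$). The resolution is to relabel: considering the geodesic from the perspective of $Q$ and running the shift construction based at $Q$ rather than at~$R$, one produces a fresh integer representation of $y'-z$ of the form \eqref{eq3.2} to which Lemma~\ref{lem31} again applies. Once this relabelling is carried out, the rest of the argument is routine and mirrors the $R$ case verbatim, delivering identical constants $c_{11}$ and~$c_{12}$ depending ultimately only on $c_5$ (together with the explicit constants $c_8,c_9$ of Lemma~\ref{lem32}).
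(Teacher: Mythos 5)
Your argument for the distance from $R$ is correct and mirrors exactly what the paper does in Lemma~\ref{lem23} adapted to the polyrectangle setting: write the vertical displacement from $R$ to $\LLL(t_0)$ via the identity~\eqref{eq3.2}, observe that since the displacement lies in $(0,x)\subset(0,1/2)$ the integer $n_4$ is the nearest integer, invoke Lemma~\ref{lem31} with $N\le n^{*}$, and bound $n^{*}$ by $\vert t_0\vert\le c_8/x^{c_9}$ since each shift has arc-length at least $\sqrt{1+\alpha^2}>1$. This is precisely the route the paper takes.

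For the distance from $Q$, however, there is a genuine gap, and you have in fact pointed directly at it yourself before waving it away. The relabelling you describe — rerunning the construction of Lemma~\ref{lem32} with $Q$ as anchor — produces a \emph{different} visiting time $t_0'$ and a \emph{different} intersection point $\LLL(t_0')$; it does not produce an integer representation of the form \eqref{eq3.2} for the quantity $y'-z$ where $y'$ is the position of the \emph{original} point $\LLL(t_0)$. The reason is structural: in the hypotheses of Lemma~\ref{lem32} the geodesic passes through $R=\LLL(0)$, not through $Q$, so there is no bounded shift count linking $Q$ to $\LLL(t_0)$ that the norm estimate of Lemma~\ref{lem31} can be applied to. Writing $y'-z=x-(y-y')$, one sees that what is actually required is an \emph{upper} bound $y-y'\le(1-c_{11}x^{c_{12}-1})x$ in addition to the lower bound you proved, and nothing in the construction of Lemma~\ref{lem32} as stated prevents $y-y'$ from being as close to $x$ as it pleases. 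To be fair, the paper's own proof of the model Lemma~\ref{lem23} dismisses this second half with a one-line appeal to a ``symmetry argument'' that it does not spell out, so your reconstruction is a plausible reading of the authors' intent; but it does not close the gap, and a complete proof would need either a modified choice of visiting time (e.g.\ targeting a subinterval of $QR$ bounded away from $Q$, at the cost of a worse constant) or an additional argument supplying the missing upper bound on $y-y'$.
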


Let $w$ be a vertical edge of the polyrectangle surface $\PPP$ as shown in Figure~3.3.
As in Section~\ref{sec2}, we can consider a finite segment $\Gamma(\sigma;T)$, given by \eqref{eq2.42}, of the geodesic $\LLL(t)$,
and the maximum gap $\mg(\Gamma(\sigma;T;w))$ defined by \eqref{eq2.44}.
Using this concept, we can establish an extension to Lemma~\ref{lem33} as follows.

\begin{lem}\label{lem34}
Let $\alpha=\root{3}\of{3}/2$.
For any finite segment $\Gamma(\sigma;T)$, given by \eqref{eq2.42}, of a geodesic $\LLL(t)$ of slope~$\alpha$,
let $\mg(\Gamma(\sigma;T;w))=x$ with $0<x<1/2$.
Then the longer finite segment
\begin{displaymath}
\Gamma\left(\sigma;T+\frac{c_8}{x^{c_9}}\right)=\left\{\LLL(t):0\le\vert\sigma-t\vert\le T+\frac{c_8}{x^{c_9}}\right\}
\end{displaymath}
has the property that
\begin{displaymath}
\mg\left(\Gamma\left(\sigma;T+\frac{c_8}{x^{c_9}};w\right)\right)\le x-c_{11}x^{c_{12}}.
\end{displaymath}
\end{lem}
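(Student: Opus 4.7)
The plan is to apply Lemmas 3.2 and 3.3 to every long gap of the intersection listing on $w$, using the existing intersection points of $\Gamma(\sigma;T)$ with $w$ as starting points for new two-direction visits, and thereby split each gap whose length is close to the maximum $x$.

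More precisely, let $QR$ be any gap on $w$ of length $x_j$ with $x_j > x - c_{11} x^{c_{12}}$, where $R$ is the top endpoint. Since $R$ is an intersection point of $\Gamma(\sigma;T)$ with $w$ (the boundary cases, when $Q$ or $R$ is an endpoint of $w$, are handled analogously), there exists $\tau_R \in [\sigma-T,\sigma+T]$ with $\LLL(\tau_R)=R$. Applying Lemma~\ref{lem32} to the shifted geodesic $\LLL_R(t)=\LLL(\tau_R+t)$ and the interval $QR$ of length $x_j$ produces a two-direction visit time $t^*$ with $0<|t^*|\le c_8/x_j^{c_9}$ and $\LLL(\tau_R+t^*)\in QR$. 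By the triangle inequality, $|\sigma-(\tau_R+t^*)|\le T+c_8/x_j^{c_9}$, so the new intersection point lies in the extended segment. By Lemma~\ref{lem33}, its vertical distance from each of $Q,R$ is at least $c_{11}x_j^{c_{12}}$, so the gap $QR$ is split into subgaps of length at most $x_j-c_{11}x_j^{c_{12}}$.

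Carrying out this procedure for every long gap, and noting that any such gap satisfies $x_j\ge x/2$ (provided $x$ is small enough that $c_{11}x^{c_{12}}<x/2$, which we may assume since otherwise the conclusion of the lemma is vacuous), one obtains $|t^*|\le 2^{c_9}c_8/x^{c_9}$ and the new intersection is at distance at least $c_{11}(x/2)^{c_{12}}=c_{11}x^{c_{12}}/2^{c_{12}}$ from $Q$ and $R$. After relabeling the constants in the statement of Lemma~\ref{lem34} (so that $c_8$ becomes $2^{c_9}c_8$ and $c_{11}$ becomes $c_{11}/2^{c_{12}}$, while $c_9$ and $c_{12}$ are unchanged), every long gap is reduced to length at most $x-c_{11}x^{c_{12}}$, proving the lemma.

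The main obstacle is handling gaps of intermediate length, which are strictly less than the maximum $x$ but still exceed $x-c_{11}x^{c_{12}}$ and thus need to be split within the same time extension $c_8/x^{c_9}$. This is overcome by the uniform lower bound $x_j\ge x/2$ for all such long gaps, which permits the time bound $c_8/x_j^{c_9}$ and the endpoint-distance bound $c_{11}x_j^{c_{12}}$ to be controlled in terms of $x$ alone, up to absolute factors $2^{c_9}$ and $2^{-c_{12}}$ that are absorbed into the constants of the lemma without affecting the polynomial form of the bounds.
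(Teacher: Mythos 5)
Your overall strategy is the natural one and matches what the paper (which omits the proof of this lemma) clearly intends: apply Lemma~\ref{lem32} and Lemma~\ref{lem33} at each intersection point bounding a long gap, use the uniform bound $x_j\ge x/2$ for all such gaps to convert the gap-dependent quantities $c_8/x_j^{c_9}$ and $c_{11}x_j^{c_{12}}$ into bounds in terms of $x$ alone, and absorb the resulting factors of $2^{c_9}$ and $2^{-c_{12}}$ into relabeled constants. That part is right, and your identification of the ``intermediate-length gap'' issue as the crux is exactly the correct thing to worry about.

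One justification in your write-up is wrong, though, and you should fix it. You dispose of the case $c_{11}x^{c_{12}}\ge x/2$ by saying the conclusion is then ``vacuous.'' It is not: if $x/2\le c_{11}x^{c_{12}}<x$ the claim $\mg\le x-c_{11}x^{c_{12}}$ is a strong (stronger than $\mg\le x/2$) nontrivial assertion, and if $c_{11}x^{c_{12}}\ge x$ the claim is actually false rather than vacuous. The correct fix is the one you already implicitly have available: since decreasing $c_{11}$ only weakens the conclusion of Lemma~\ref{lem34}, you may replace the $c_{11}$ from Lemma~\ref{lem33} by any smaller positive constant; with $c_{12}>1$, choosing $c_{11}<2^{c_{12}-2}$ guarantees $c_{11}x^{c_{12}}<x/2$ for every $x\in(0,1/2)$, so the hypothesis you need holds across the entire stated range rather than only for small $x$. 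With that repair (and carrying through the consistent relabeling $c_8\to 2^{c_9}c_8$, $c_{11}\to c_{11}/2^{c_{12}}$ you already describe), the argument is complete; the remaining points you flag as ``handled analogously'' — the two boundary gaps at the ends of $w$, which require the mirror form of Lemma~\ref{lem32} with $Q$ rather than $R$ as the basepoint — are indeed routine.
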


Iterating Lemma~\ref{lem34} sufficiently many times, we obtain the following.

\begin{lem}\label{lem35}
Under the hypotheses of Lemma~\ref{lem34}, there exists positive constants $c_{13}=c_{13}(\alpha)$ and $c_{14}=c_{14}(\alpha)$
such that the longer finite segment
\begin{displaymath}
\Gamma\left(\sigma;T+\frac{c_{13}}{x^{c_{14}}}\right)=\left\{\LLL(t):0\le\vert\sigma-t\vert\le T+\frac{c_{13}}{x^{c_{14}}}\right\}
\end{displaymath}
has the property that
\textcolor{white}{xxxxxxxxxxxxxxxxxxxxxxxxxxxxxx}
\begin{displaymath}
\mg\left(\Gamma\left(\sigma;T+\frac{c_{13}}{x^{c_{14}}};w\right)\right)\le\frac{x}{2}.
\end{displaymath}
\end{lem}

We can now deduce Theorem~\ref{thm2} from Lemma~\ref{lem35} in the same way
as we deduce Theorem~\ref{thm1} from Lemma~\ref{lem25} at the end of Section~\ref{sec2}.

%
%

\section{Extension to algebraic polyrectangles}\label{sec4}

Since the slope $\alpha=\root{3}\of{3}/2$ in Theorem~\ref{thm2} is irrational,
it follows from a celebrated result of Veech that a half-infinite geodesic of slope $\alpha$ on the regular octagon surface
is uniformly distributed on the surface.
However, the result of Veech unfortunately does not say anything about the time-quantitative behavior of such a geodesic.
This is due to the fact that the proof uses ergodic theory.
More precisely, ergodicity of some relevant transformation is established, and then Birkhoff's ergodic theorem is applied.
But Birkhoff's ergodic theorem does not have any explicit error term.

The interesting point of Theorem~\ref{thm2} is that this is a time-quantitative result.
It also has far-reaching generalization to \textit{algebraic polyrectangle surfaces}.
For such a surface, there is a net of the surface on the plane with the following two properties:

(i)
The net is a rectangular polygon, possibly with holes inside.

(ii)
The coordinates of all the vertices are algebraic numbers.

We call such a net an \textit{algebraic net} of the surface.

We can study geodesic flow on an arbitrary algebraic polyrectangle surface.
Using the standard \textit{$4$-copy construction} trick, we can reduce the problem if necessary
to one concerning $1$-direction geodesic flow on an arbitrary algebraic polyrectangle translation surface.
With a straightforward generalization of the proof of Theorem~\ref{thm2}, one can establish the following result.

\begin{thm}\label{thm3}
Let $\PPP$ be an algebraic polyrectangle translation surface, and let $\LLL(t)$ be a $1$-direction geodesic on $\PPP$
such that the slope $\alpha$ is an algebraic number.
Let $K$ denote the smallest extension of $\Qq$ that contains the coordinates of all the vertices of an algebraic net of~$\PPP$,
and suppose that $\alpha\not\in K$.
Then there are explicitly computable positive constants $c_{15}=c_{15}(\PPP;\alpha)$ and $c_{16}=c_{16}(\PPP;\alpha)$
such that, for any integer $n\ge2$ and any aligned square $A$ of side length $1/n$ on the surface $\PPP$,
there exists a real number $t_0$ such that
\begin{displaymath}
0\le t_0\le c_{15}n^{c_{16}}
\quad\mbox{and}\quad
\LLL(t_0)\in A.
\end{displaymath}
\end{thm}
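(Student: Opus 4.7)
The plan is to reproduce the proof of Theorem~\ref{thm2} in Section~\ref{sec3} step by step, replacing each ingredient by its natural analog for an arbitrary algebraic polyrectangle translation surface. After the standard $4$-copy construction reduction, I may assume that $\PPP$ already is a translation surface equipped with an algebraic net. Let $s$ denote the number of vertical edges of $\PPP$, let $w_1,\ldots,w_p\in K$ be the distinct horizontal widths of the rectangles appearing in the horizontal streets of the polyrectangle decomposition, let $h_1,\ldots,h_q\in K$ be the distinct vertical heights, and let $d=[K(\alpha):\Qq]$.

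The first step is to establish the analog of Lemma~\ref{lem31}. Exactly as in Section~\ref{sec3}, if $n^*$ consecutive shifts under the $\alpha$-flow take a point $w_{i'}(y)$ to a point $w_{i''}(y^*)$, then tracking the cumulative horizontal travel according to which rectangle is crossed and the cumulative vertical resets according to which vertical edges are identified yields
\[
y^{*}-y \;=\; \alpha\sum_{r=1}^{p}n_r w_r \;-\; \sum_{r=1}^{q}m_r h_r \;-\; n_0,
\]
where $n_r,m_r\ge0$ and $n_0\in\Zz$ are all bounded in absolute value by a fixed multiple of $n^{*}$. The hypothesis $\alpha\notin K$ ensures that, whenever at least one $n_r$ is nonzero, the right-hand side is a nonzero element of $K(\alpha)$. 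Multiplying by a suitable common denominator of $\{w_r,h_r\}$ produces an algebraic integer $\gamma\in K(\alpha)$, and a trivial bound on the Galois conjugates $\gamma_2,\ldots,\gamma_d$ in terms of $N=\max\{|n_r|,|m_r|,|n_0|\}$ combined with $|N(\gamma)|\ge1$ delivers the polynomial diophantine inequality
\[
\|y^{*}-y\|\;>\;\frac{c_5'}{N^{d-1}}.
\]

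With this in hand, the heart of the proof is the direct analog of Lemma~\ref{lem32}: for any vertical open interval $QR$ of length $0<x<1/2$ on a vertical edge of $\PPP$, there is a $2$-direction visiting time $t^{*}$ with $0<|t^{*}|\le c_8/x^{c_9}$ and $\LLL(t^{*})\in QR$. The four-claim structure of Section~\ref{sec3} carries over essentially verbatim, with three adjustments: the pigeonhole bound $[(30)^6/c_5 x^5]+1$ in Claim~1 is replaced by $[C_1/x^{d-1}]+1$, with $C_1$ depending only on $p,q,s,d,c_5'$; the threshold exponent $5^{16}$ in the dichotomy~\eqref{eq3.8} is replaced by $(d-1)^{2(s+1)}$, since edge repetition is forced within $s+1$ top intervals in either the forward or reverse shift process; and the dichotomy constant $c_{10}$ is then chosen so that the analog of~\eqref{eq3.30} is automatic for all $i\le s+1$. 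Given this, Claims~1 and~3 are immediate from the generalized Lemma~\ref{lem31} together with the pigeonhole principle applied to the total length $\sum_i h_i$ of vertical edges, while Claims~2 and~4 follow by retracing the $\alpha$-flow as before. The inductive length estimates on $y_{r+i}$ come, exactly as in~\eqref{eq3.22} and~\eqref{eq3.25}, from combining the cumulative bound on $k_r+k_{r+1}+\cdots+k_{r+i}$ with the diophantine estimate on $\|y_{r+i}-x_{r+1}\|$. From the generalized Lemma~\ref{lem32}, the analogs of Lemmas~\ref{lem33}, \ref{lem34} and~\ref{lem35} follow by the same three-line deductions as at the end of Section~\ref{sec3}, and the doubling iteration at the end of Section~\ref{sec2} then yields the polynomial-time density on any aligned square, with explicit exponent $c_{16}$ determined by $d$ and $s$.

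The main obstacle is the arithmetic bookkeeping in the choice of exponent and constant in the dichotomy. One must verify that the iterated lower bound
\[
y_{r+i}\;\ge\;\Bigl(\tfrac{c_5'}{C}\Bigr)^{(d+1)^{2i}}x_r^{(d-1)^{2i}}
\]
is in fact forced by a single fixed $c_{10}$ up to the edge-repetition horizon $i=s+1$, and that the norm-based bound on $\gamma$ correctly accommodates the denominators of all the widths $w_r$ and heights $h_r$ of the algebraic net; these are purely arithmetic checks of the type already carried out in Section~\ref{sec3} for $K=\Qq(\sqrt{2},\sqrt[3]{3})$, and once they are completed no genuinely new idea beyond Section~\ref{sec3} is required.
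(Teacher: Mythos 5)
Your proposal reproduces exactly the route the paper has in mind when it writes ``with a straightforward generalization of the proof of Theorem~\ref{thm2}'': generalize the crucial displacement identity~\eqref{eq3.2} to a $\Zz$-linear combination with coefficients bounded by the number of shifts, use the norm argument to get the diophantine inequality with exponent $d-1$ where $d=[K(\alpha):\Qq]$, and run the four-claim machinery of Lemma~\ref{lem32} with the threshold exponent $5^{16}$ replaced by $(d-1)^{2(s+1)}$. The only point worth flagging is that the vertical resets in your generalized identity should more precisely be the $y$-components of the translation vectors identifying pairs of vertical edges of the net (rather than literally the rectangle heights), but since the net is algebraic these also lie in $K$, so the norm argument and every subsequent step go through unchanged; otherwise the proposal is correct and follows the paper's approach.
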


In view of unfolding, the same holds for any billiard on an algebraic polyrectangle table~$\PPP$,
where the initial slope is an algebraic number independent of the number field generated by~$\PPP$.

Theorem~\ref{thm2} is clearly a special case of Theorem~\ref{thm3}.
Suppose that in Figure~3.2, the vertex $A_6$ has coordinates $(0,0)$, and the vertex $A_5$ has coordinates $(0,\sqrt{2})$.
Then the coordinates of all the vertices of this algebraic net in Figure~3.2 are all in the quadratic field $\Qq(\sqrt{2})$,
and it is clear that the cubic slope $\alpha=\root{3}\of{3}/2$ is not in this number field.

The class of \textit{algebraic polyrectangle translation surfaces} is large.
For instance, every regular polygon billiard is equivalent to a $1$-direction geodesic flow on one of these surfaces.
Note that the same holds for any Veech surface.

It is interesting to mention that for the family of \textit{street-rational polyrectangle surfaces}, we can prove stronger results,
at least for an explicit countable set of slopes.
Here we can establish superdensity and also determine the \textit{irregularity exponent}
which represents a precise form of time-quantitative uniformity; see \cite{BDY1,BDY2,BCY1,BCY2}.
Note that this family also contains every regular polygon billiard surface and every Veech surface.

These are the two general classes of flat surfaces for which we can prove time-quantitative results
concerning the long-term behavior of geodesic flow.
These will be discussed in our first monograph in progress.

\end{document}